\newcommand{\N}{\ensuremath{\mathbb{N}}}
\newcommand{\Z}{\ensuremath{\mathbb{Z}}}
\newcommand{\R}{\ensuremath{\mathbb{R}}}
\newcommand{\C}{\ensuremath{\mathbb{C}}}
\newtheorem{thm}{Theorem}[section]
\newtheorem{condition}[thm]{Condition}
\newtheorem*{thm2}{Theorem}
\newtheorem*{question*}{Question}
\newtheorem{question}{Question}
\newtheorem{cor}[thm]{Corollary}
\newtheorem*{cor2}{Corollary}
\newtheorem{lemma}[thm]{Lemma}
\newtheorem{prop}[thm]{Proposition}
\theoremstyle{remark}
\newtheorem{rmk}[thm]{Remark}
\theoremstyle{definition}
\newtheorem{defin}[thm]{Definition}
\newtheorem{ex}[thm]{Example}
\renewcommand{\l}{\ensuremath{\mathcal{L}}}
\newcommand{\+}[1]{\ensuremath{\overset{{#1}}{+}}}
\newcommand{\on}{\operatorname}
\begin{document}

\title{Existence of symmetric maximal noncrossing collections of $k$-element sets}
\author{Andrea Pasquali}
\address{Dept.~of Mathematics, Uppsala University, P.O.~Box 480, 751 06 Uppsala, Sweden}
\email{andrea.pasquali@math.uu.se}
\author{Erik Th\"ornblad}
\email{erik.thornblad@math.uu.se}
\author{Jakob Zimmermann}
\email{jakob.zimmermann@math.uu.se}

\begin{abstract}
  We investigate the existence of maximal collections of mutually noncrossing $k$-element subsets of $\left\{ 1, \dots, n \right\}$ that
  are invariant under adding $k\pmod n$ to all indices. Our main result is that such a collection exists if and only if $k$ is congruent to $0, 1$ or $-1$ modulo
  $n/\on{GCD}(k,n)$. Moreover, we present some algebraic consequences of our result related to self-injective Jacobian algebras.
\end{abstract}

\maketitle

\section*{Introduction}
Two subsets $I$ and $J$ of $\left\{ 1, \dots, n \right\}$ are said to be \emph{noncrossing} if there are no cyclically ordered $a, b, c, d$ such 
that $a,c\in I\setminus J$ and $b,d\in J\setminus I$. We are interested in maximal collections of mutually noncrossing sets of some fixed 
size $k$. In the case $k=2$, such collections are maximal collections of noncrossing segments between $n$ points on a circle, that is, 
triangulations of the $n$-gon. 
The case of general $k$ can be tackled using the machinery of alternating strand diagrams and plabic graphs developed by Postnikov \cite{Pos06}.

We call a collection of $k$-element subsets of $\left\{ 1, \dots, n \right\}$ \emph{symmetric} if it is invariant under adding $k\pmod n$ to all indices.
In a recent paper \cite{Pas17b} Pasquali showed that symmetric maximal noncrossing collections naturally give rise to
self-injective Jacobian algebras. More precisely he showed that any maximal symmetric noncrossing collection for a pair $(k,n)$ gives rise to a self-injective Jacobian algebra
whose quiver with potential can be obtained from an embedding of the collection into the plane. Now a natural
question is for which pairs $(k,n)$ does there exist such a collection. In this paper we answer this question by the following theorem.
\begin{thm2}[Theorem~\ref{thm:main}]
  Let $(k,n)\in \Z^2$, with $n\geq 1$ and $0\leq k\leq n$, and call $d = n/\on{GCD}(k,n)$. Then the following are equivalent:
  \begin{itemize}
    \item there exists a symmetric maximal noncrossing collection of $k$-element subsets of $\left\{ 1, \dots, n \right\}$;
    \item the number $k$ is congruent to $0, 1$ or $-1$ modulo $d$.
  \end{itemize}
\end{thm2}

Our proof goes via an explicit construction of a symmetric maximal noncrossing collection. It should be noted that there exist symmetric maximal noncrossing
collections which do not arise in this way. In particular the problem of classifying all symmetric maximal noncrossing collections is, as far as we know, still open.

Our motivation for studying maximal noncrossing collections comes as mentioned above from algebra, specifically representation theory
and cluster theory. In the following we give a more detailed account of the connection between the combinatorics 
of maximal noncrossing collections and algebra. It turns out that the combinatorics of noncrossing sets corresponds to the cluster 
combinatorics of the homogeneous coordinate ring of the Grassmannian $\on{Gr}_\C(k,n)$; see \cite{Sco06}. Every cluster consisting of
Pl\"ucker coordinates corresponds to a maximal noncrossing collection, and both the cluster variables and the quiver corresponding 
to the cluster can be constructed from the collection (see \cite{OPS15}). Moreover, there is a categorification of this cluster structure 
using Cohen-Macaulay modules over an infinite dimensional algebra $B = B(k,n)$ \cite{JKS16}. An indecomposable Cohen-Macaulay $B$-module is associated 
to every $k$-element subset of $\{1, \dots, n\}$, and the noncrossing condition corresponds to the vanishing of $\on{Ext}^1_B$ between 
these modules. Thus a maximal noncrossing collection corresponds to a cluster tilting object in the category $\on{CM}(B)$ of Cohen-Macaulay $B$-modules. It was shown 
in \cite{BKM16} that in fact the endomorphism ring of this cluster tilting module is a frozen Jacobian algebra. 
The cluster of Pl\"ucker coordinates corresponding to the collection gives the quiver with potential of this algebra.

One can also consider the analogous story in the stable category $\underline{\on{CM}}(B)$, which corresponds to taking a quotient by 
the idempotent corresponding to the frozen vertices. Then one gets a finite-dimensional Jacobian algebra $\Lambda$, and using the 
triangulated structure of $\underline{\on{CM}}(B)$ one can prove that $\Lambda$ is self-injective if and only if the corresponding 
maximal noncrossing collection is invariant under adding $k$ to all indices $\pmod n$. The operation of adding $k$ modulo $n$ is 
nothing but a planar rotation of the quiver (or of the boundary $n$-gon) by an angle of $\frac{2\pi k }{n}$. One reason to look at self-injective Jacobian 
algebras is that they are precisely the 3-preprojective algebras of 2-representation finite algebras \cite{HI11b}.

These algebras have an automorphism called the Nakayama automorphism. One can consider the order of this automorphism (when it is finite), and all known examples have either small order or 
are in some sense very simple (roughly speaking, the number of vertices is linear in the order). A consequence of our result is:
\begin{cor2}
  [{Corollary~\ref{cor:algcons2}}]
  Let $d\in \Z_{>1}$. There exist infinitely many self-injective Jacobian algebras with a Nakayama automorphism of order $d$. 
\end{cor2}

The paper is organised in the following way. The next section introduces the problem and gives the statement of our main theorem.
In Section \ref{sec:necessity} we prove one direction of Theorem~\ref{thm:main}, using planar embeddings of noncrossing collections.
Section \ref{sec:setup} is devoted to setting up the notation for our constructions and proving some auxiliary results.
In Sections \ref{sec:specialconstruction} and \ref{sec:construction} we construct an explicit symmetric maximal noncrossing collection, 
first if $k\mid n$ and then for general $(k,n)$. This provides the other direction in the proof of Theorem~\ref{thm:main}.
In Section \ref{sec:examples} we compute an explicit example to illustrate our construction. Finally, in Section \ref{sec:algcons} we 
explain some algebraic consequences of our combinatorial result, in particular about existence of self-injective Jacobian algebras.

\subsection*{Acknowledgements.} We would like to express our gratitude to Martin Herschend and Laertis Vaso, for carefully reading the manuscript and suggesting improvements. We also thank two anonymous referees for their feedback.

\section{The problem}\label{sec:problem}

In the following let $0 < k \leq n$ be integers. A \emph{cyclically ordered set} is a finite set $X$ together with a bijection $S_X:X\to X$ 
such that for all $x, y\in X$ there is $n\in \Z$ such that $S_X^n(x) = y$. We think of $S_X$ as a ``successor function''.
If $X$ is a cyclically ordered set and $\varnothing \neq Q\subseteq X$, there is an induced cyclic order $S_Q$ on $Q$. Indeed, for $q\in Q$,
we define $S_Q(q)= S_X^m(q)$, where $m>0$ is 
the least positive integer such that $S_X^m(q)\in Q$. 
If $X$ is cyclically ordered and $x_1,x_2, \dots, x_n\in X$ are distinct elements, we write
\begin{align*}
  x_1<_\circ x_2<_\circ \cdots <_\circ x_n
\end{align*}
if $S_Q(x_i) = x_{i+1}$ for $1\leq i<n$, where $Q = \left\{ x_1, x_2, \dots, x_n \right\}$. 
With this we can now give the following definition:

\begin{defin}
  Let $X$ be a cyclically ordered set.
  Two subsets $I, J\subseteq X$ are said to be \emph{crossing} if there exist $a<_\circ b<_\circ c<_\circ d\in X$ such that $a,c\in I\setminus J$ and 
  $b,d\in J\setminus I$. Otherwise $I, J$ are said to be \emph{noncrossing}.
\end{defin}

For $n \in \N$, we write $[n] = \left\{ 1, 2, \dots, n \right\}$.
For $a,b\in [n]$, we write $$
a\+{n}b = 
\begin{cases}
  a+b, &\text{ if } a+b\in [n];\\
  a+b -n, &\text{ otherwise}.
\end{cases}
$$
We will always consider the cyclic order on $[n]$ given by $S_{[n]}(x) = x\+{n}1$.
For a subset $I$ of $[n]$, we denote by $I\+{n}k$ the subset $\left\{ i\+{n}k \; |\; i\in I \right\}$ of $[n]$.
For a collection $\mathbb I$ of subsets of $[n]$, we denote by $\mathbb I \+{n}k$ the collection $\left\{ I\+{n}k\;|\; I\in \mathbb I \right\}$.
\begin{defin}
  A collection $\mathbb I$ of $k$-element subsets of $[n]$ is a \emph{$(k,n)$-noncrossing collection} if $I$ and $J$ are noncrossing for all $I, J\in \mathbb I$.
  A $(k,n)$-noncrossing collection is \emph{maximal} if it is maximal with respect to inclusion in the set of all $(k,n)$-noncrossing collections.
\end{defin}
We will often omit the reference to $(k,n)$ when it is clear from the context.

\begin{defin}
  A collection $\mathbb I$ of $k$-element subsets of $[n]$ is \emph{symmetric} if $\mathbb I = \mathbb I\+{n}k$.
\end{defin}

Observe that this definition depends on $k$ and $n$. In case these are ambiguous we will spell out $\mathbb I = \mathbb I\+{n}k$.

We can now formally state the question which we address in this paper.

\begin{question}
  \label{question:main}
   For which $(k,n)$ does there exist a maximal $(k,n)$-noncrossing collection which is symmetric?
\end{question}

\begin{rmk}
 It is worth pointing out that we look for symmetric collections which are maximal among all collections, not just among the symmetric ones. 
\end{rmk}

It is easy to see that such collections do not exist for all choices of $n$ and $k$. It turns out that the following condition is what we need.

\begin{condition}\label{cond:star}
The pair $(k,n)\in \Z^2$ is such that $n \geq 1$, $0 \leq k \leq n$, 
and $k$ is congruent to $0, 1$ or $-1$ modulo $n/\on{GCD}(k,n)$.
\end{condition}
Indeed, we have the following:
 \begin{thm} \label{thm:main}
  There exists a symmetric maximal $(k,n)$-noncrossing collection if and only if $(k,n)$ satisfies Condition~$\ref{cond:star}$.
\end{thm}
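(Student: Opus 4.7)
The proof splits into a necessity direction and a sufficiency direction, which are proved separately. Write $g=\on{GCD}(k,n)$ and $d=n/g$, and let $\sigma$ denote the ``shift by $k$'' operation sending $I\mapsto I\+{n}k$. Then $\sigma$ generates a $\Z/d$-action on the set of $k$-element subsets of $[n]$, and a symmetric collection is precisely a $\sigma$-invariant one. The trivial cases $k=0$ and $k=n$ can be handled directly, so the interesting range is $1\leq k<n$.

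For the necessity direction, my plan is to encode a maximal $(k,n)$-noncrossing collection by a planar embedding in the disk with $n$ marked points on the boundary, using the dictionary between noncrossing collections and alternating strand diagrams or plabic graphs from \cite{Pos06} and \cite{OPS15}. A symmetric collection yields a plane graph invariant under the rotation $\rho$ of the disk by angle $2\pi k/n$; this rotation has order $d$ and fixes only the centre of the disk. Hence the centre must lie on a face, on the interior of an edge, or at a vertex of the embedded graph, and I would analyse each case in turn. Each case should produce a congruence on $k$ modulo $d$ from the local combinatorics of the embedded graph at the fixed point: the ``face'' case should force $k\equiv 0\pmod d$, while the two degenerate cases (edge or vertex through the centre) should give $k\equiv\pm1\pmod d$. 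Combining the three possibilities accounts exactly for the residues in Condition~\ref{cond:star}.

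For the sufficiency direction, the plan is to exhibit, for each admissible $(k,n)$, a symmetric maximal noncrossing collection. I would first treat the divisible case $k\mid n$ in Section~\ref{sec:specialconstruction}: here $g=k$, $d=n/k$, and Condition~\ref{cond:star} is automatic, so the task is to write down one collection that is simultaneously noncrossing, $\sigma$-invariant, and of the maximal possible size $k(n-k)+1$. A natural candidate arranges $k$-subsets in ``layers'' indexed by the cyclic group of order $d$, starting from a central $\sigma$-stable configuration and mutating outwards; the layered structure makes $\sigma$-invariance transparent. In Section~\ref{sec:construction} I would bootstrap this to the general case by reducing along the decomposition of $[n]$ into the $g$ orbits of $\sigma$ acting on $[n]$, grafting onto the skeleton coming from the special case one of three local models according to whether $k\equiv 0$, $1$, or $-1\pmod d$.

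The principal obstacle is the sufficiency direction, and within it the simultaneous verification of maximality and noncrossing-ness for the explicit construction. Unlike the classical case $k=2$, for which maximality reduces to a triangle count, maximality for general $k$ does not drop out of an elementary Euler-characteristic argument; one has to independently check that the construction produces $k(n-k)+1$ mutually noncrossing sets. Packaging the three residue classes $0,1,-1\pmod d$ into one manifestly $\sigma$-symmetric construction is the combinatorial heart of the argument and is what forces the sufficiency proof to occupy two sections rather than one.
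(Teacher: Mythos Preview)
Your high-level strategy matches the paper's: necessity via a fixed-point analysis of the rotation acting on the planar CW-complex $\Sigma(\mathbb I)$, and sufficiency via an explicit construction treated first for $k\mid n$ and then in general. Two points, however, are genuinely wrong as stated.

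In the necessity direction you have the case outcomes reversed. In the paper's argument (Proposition~\ref{prop:necessity}), if the origin is a \emph{vertex} $p(e_I)$ then $I=I\+{n}k$ forces $d\mid |I|=k$, i.e.\ $k\equiv 0\pmod d$; if the origin lies in the interior of a \emph{face}, that face is labelled by a set $K$ of size $k-1$ or $k+1$ with $K=K\+{n}k$, giving $d\mid(k\pm1)$, i.e.\ $k\equiv\pm1\pmod d$. The edge case is not a source of $\pm1$: it is ruled out entirely, since the two faces meeting along that edge have labels of sizes $k-1$ and $k+1$ and hence cannot be interchanged by the rotation.

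In the sufficiency direction, your claim that Condition~\ref{cond:star} is automatic when $k\mid n$ is false: for $(k,n)=(5,35)$ one has $d=7$ and $5\not\equiv 0,\pm1\pmod 7$. The paper assumes Condition~\ref{cond:star} in the divisible case as well; what makes the reduction work is rather that Condition~\ref{cond:star} for $(k,n)$ and for $(k,dk)$ are the \emph{same} congruence on $k$ modulo $d$. The paper then goes \emph{upward}: it builds a symmetric maximal $(k,dk)$-collection layered as $\mathcal L_1,\dots,\mathcal L_{k-p+1}$, discards the outer layers $\mathcal L_1,\dots,\mathcal L_{k-g}$ to obtain a subcollection $\mathbb J$ supported on a union of $g$ orbits (a cyclically ordered subset of $[dk]$ of size exactly $n$), and pulls $\mathbb J$ back to $[n]$ via an order-preserving bijection. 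There is no ``grafting of three local models'' in the passage from $dk$ to $n$; the three residues $0,\pm1$ enter only in the cardinality count for the innermost layers of the $(k,dk)$ construction (Propositions~\ref{prop:counting} and~\ref{prop:finalcounting}), not in the reduction step.
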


We give a proof of Theorem \ref{thm:main} at the end of Section \ref{sec:construction}. The strategy is as follows: in Section \ref{sec:necessity} we
will prove that Condition~$\ref{cond:star}$ is necessary, and in Sections \ref{sec:specialconstruction} and \ref{sec:construction} we will explicitly construct a symmetric 
maximal noncrossing collection to show that Condition~$\ref{cond:star}$ is sufficient.

\begin{rmk}
 Observe that $I, J \subseteq [n]$ are noncrossing if and only if the complements $[n] \setminus I$ and $[n] \setminus J$ are. Moreover, a pair $(k,n)$
 satisfies Condition~$\ref{cond:star}$ if and only if the pair $(n-k,n)$ does. Thus it is enough to study the case $k\leq \frac{n}{2}$. Our
 construction and 
 result work for general $k\leq n$, so we do not make this assumption.
 \end{rmk}

 To prove Theorem \ref{thm:existence} we will in fact use a characterisation of maximal noncrossing collections, which was first conjectured in \cite{Sco05} (see also \cite{LZ98})
 and then proved in \cite{OPS15}.

\begin{thm}
  [{\cite[Theorem~4.7]{OPS15}}] \label{thm:ops}
  A $(k,n)$-noncrossing collection $\mathbb I$ of $k$-element subsets of $[n]$ is a maximal $(k,n)$-noncrossing collection if and only if $|\mathbb I| = k(n-k)+1$.
\end{thm}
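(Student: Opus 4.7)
The plan is to realize each maximal noncrossing collection as the set of face labels of a planar combinatorial object -- a plabic graph, or equivalently a plabic tiling of the disk -- and then count its faces via Euler's formula; this essentially mirrors the strategy of \cite{OPS15}. First I would associate to each $(k,n)$-noncrossing collection $\mathbb{I}$ a planar object $G(\mathbb{I})$ embedded in the closed disk with $n$ boundary vertices $1, \dots, n$. The natural construction draws each $I \in \mathbb{I}$ as a convex polygon with vertex set $I$ inscribed in the $n$-gon; the noncrossing condition on pairs is exactly what is needed to guarantee that these polygons, together with certain complementary regions, assemble into a polygonal subdivision. More formally, $G(\mathbb{I})$ is a plabic graph in the sense of Postnikov whose face labels (defined by its trip permutation) recover $\mathbb{I}$, and whose trip permutation is the cyclic $k$-shift $i \mapsto i + k \pmod n$.

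Next I would prove the core combinatorial claim: $\mathbb{I}$ is maximal if and only if $G(\mathbb{I})$ is \emph{reduced}, meaning it admits no bigon or parallel-strand reductions. If $G(\mathbb{I})$ is not reduced, some local move produces a $k$-set $J \notin \mathbb{I}$ noncrossing with every element of $\mathbb{I}$, contradicting maximality. Finally, every reduced plabic graph whose trip permutation is the cyclic $k$-shift has exactly $k(n-k)+1$ internal faces. I would establish this by noting that any two such reduced graphs are connected by ``square moves'' that preserve the face count, and then computing the count on one explicit reduced graph via Euler's formula, using that the $n$ cyclic intervals $\{i, i+1, \dots, i+k-1\}$ always appear as boundary faces.

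The hard part will be establishing the equivalence between maximality and reducedness. One must verify both that any local reduction in the plabic graph exposes a new valid $k$-set noncrossing with $\mathbb{I}$, and that, conversely, the absence of local reductions leaves no room to add a $k$-set without creating a crossing. This is the technical heart of \cite{OPS15} and depends on a careful analysis of alternating strand diagrams and their trip permutations. An alternative route via induction on $n$, splitting $\mathbb{I}$ along a ``separating'' chord, would face combinatorial bookkeeping of comparable difficulty, so the plabic-graph framework seems unavoidable.
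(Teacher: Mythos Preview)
The paper does not prove this theorem at all: it is quoted verbatim from \cite[Theorem~4.7]{OPS15} and used as an external black box (the surrounding text says ``first conjectured in \cite{Sco05} \dots and then proved in \cite{OPS15}''). There is therefore no in-paper argument to compare your proposal against.

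Your sketch is a fair high-level summary of the strategy in \cite{OPS15}: pass to plabic graphs / alternating strand diagrams, identify maximality of the collection with reducedness of the diagram, and then use that any two reduced diagrams with the same trip permutation are connected by local moves preserving the face count, computing that count once by Euler's formula. You correctly flag the equivalence ``maximal $\Longleftrightarrow$ reduced'' as the substantive step. Note, however, that one direction of your outline is slightly off: a non-reduced diagram does not directly ``expose a new $k$-set''; rather, reducing it \emph{decreases} the number of faces, so the corresponding collection has fewer than $k(n-k)+1$ labels and hence cannot be maximal (since a maximal one of the right size exists). The genuinely delicate implication is that every maximal noncrossing collection arises as the face-label set of some reduced diagram, which in \cite{OPS15} is handled via the machinery of plabic tilings rather than by the informal polygon-drawing you describe. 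If you intend to supply a self-contained proof, that construction is where the real work lies.
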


\begin{rmk}
  This implies that Question \ref{question:main} is equivalent to:
  for which $(k,n)$ does there exist a symmetric noncrossing collection of cardinality $k(n-k)+1$?
  In Sections \ref{sec:specialconstruction} and \ref{sec:construction} we will construct collections of noncrossing sets and prove that they are maximal 
  by determining that they have the correct cardinality.
\end{rmk}

\section{Necessity of Condition~$\ref{cond:star}$}\label{sec:necessity}

The aim of this section is to prove the following statement:
\begin{prop}\label{prop:necessity}
If there exists a symmetric maximal $(k,n)$-noncrossing collection, then $(k,n)$ must satisfy Condition~$\ref{cond:star}$.
\end{prop}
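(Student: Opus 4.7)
The plan is to exploit a planar realization of the collection. By Theorem~\ref{thm:ops} a maximal $(k,n)$-noncrossing collection $\mathbb{I}$ has exactly $k(n-k)+1$ elements, and it may be drawn inside a disk with $n$ marked boundary points $1,\dots,n$ using Postnikov's alternating strand diagram and its associated plabic graph: the elements of $\mathbb{I}$ appear as labels of the internal faces of a dissection of the disk, and the action $\sigma\colon I\mapsto I\+{n}k$ is induced by rotating the disk through $2\pi/d$. If $\mathbb{I}$ is symmetric we may choose the planar realization to be invariant under that rotation.

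A non-trivial planar rotation has a single fixed point, the centre of the disk, which must lie in one of three places of the dissection: the interior of a face, the interior of an edge, or at a vertex. If the centre lies in the interior of a face, that face is set-wise fixed by $\sigma$, so its label $C\in\mathbb{I}$ satisfies $C=C\+{n}k$. Since every orbit of $\sigma$ on $[n]$ has size $d$ and $C$ is a union of such orbits, $d\mid|C|=k$ and hence $k\equiv 0\pmod d$. If the centre lies in the interior of an edge, $\sigma$ fixes this edge set-wise but cannot fix it pointwise, so it must swap its two endpoints; then $\sigma^2$ acts as the identity on the edge, and hence on the whole picture, forcing $d\in\{1,2\}$, a range in which Condition~\ref{cond:star} is automatic.

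The third and most delicate case is when the centre is a vertex of the planar structure. Then $\sigma$ fixes the vertex and cyclically permutes its $m$ incident faces. In a plabic graph the labels of faces meeting at a white vertex all contain a common $(k-1)$-subset $I'$ (the labels being the $m$ distinct size-$k$ supersets of $I'$), whereas the labels at a black vertex all lie inside a common $(k+1)$-subset $I''$. Invariance under $\sigma$ of the set of labels around the fixed vertex forces either $I'=I'\+{n}k$ or $I''=I''\+{n}k$; in the white case one gets $d\mid|I'|=k-1$, i.e.\ $k\equiv 1\pmod d$, and in the black case $d\mid|I''|=k+1$, i.e.\ $k\equiv -1\pmod d$.

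The first two cases are essentially immediate geometric observations; the main obstacle is setting up enough plabic-graph combinatorics to make the third case rigorous, in particular to argue that the common $(k\mp 1)$-subset is really respected by $\sigma$. Once this is in place, combining the three cases yields $k\equiv 0,\pm 1\pmod d$, which is Condition~\ref{cond:star}, proving Proposition~\ref{prop:necessity}.
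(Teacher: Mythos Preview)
Your argument is correct and rests on the same fixed-point idea as the paper, but carried out on the planar dual. The paper works with the CW-complex $\Sigma(\mathbb I)$ from \cite{OPS15}, whose \emph{vertices} are the points $p(e_I)=\sum_{i\in I} v_i$ for $I\in\mathbb I$ and whose \emph{faces} carry the $(k\pm 1)$-labels; you use the plabic graph, where the faces carry the $k$-labels and the black/white vertices encode the $(k\pm 1)$-data. Your three cases are therefore exactly the paper's three cases with ``vertex'' and ``face'' interchanged. One practical difference: in the paper the embedding is given by an explicit linear map, so rotational invariance of $\Sigma(\mathbb I)$ is immediate from $p(e_{I\+{n}k})=\rho\bigl(p(e_I)\bigr)$, whereas you must argue separately that the Postnikov diagram can actually be \emph{drawn} rotation-invariantly (true, but an extra step you flag yourself). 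Conversely, your vertex case is a bit cleaner combinatorially: the common $(k\mp 1)$-set is simply the intersection (resp.\ union) of the incident face labels and is thus $\sigma$-fixed, while the paper's dual face case invokes \cite[Theorem~5.7(a)]{BKM16} to identify the centre of the face. Two minor remarks: the rotation realising $\sigma$ is through $2\pi k/n$, not $2\pi/d$ (they generate the same cyclic group of order $d$, so nothing changes); and your edge case (``$d\le 2$ makes Condition~\ref{cond:star} automatic'') is logically fine, while the paper instead observes that this case does not arise at all, since the two faces bounding an edge of $\Sigma(\mathbb I)$ have labels of different cardinalities and hence cannot be swapped by $\rho$.
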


The proof uses combinatorial tools developed in \cite{OPS15}, in particular a planar CW-complex which is associated to a noncrossing collection.
We recall some details about its construction for convenience.

Let $v_1, \dots, v_n$ be the vertices of a regular $n$-gon in $\R^2$ centered at the origin, labeled in clockwise order. 
Let $\left\{ e_1, \dots, e_n \right\}$ be the standard basis of $\R^n$, and define a linear map $p: \R^n\to \R^2$ by $p(e_i) = v_i$ for all $i$. 
If $I$ is any subset of $[n]$, we define $e_{I} = \sum _{i\in I}e_i\in \R^n$. 

Let now $\mathbb I$ be a maximal noncrossing $(k,n)$-collection. Denote by $V$ the set $\left\{ e_I\,|\, I\in \mathbb I \right\}\subseteq \R^n$, then we have 
$p(e_I) = \sum _{i\in I}v_i$.
This defines an embedding of $\mathbb I$ as a discrete collection of points in $\R^2$. There is a way of defining edges and faces so that we get a 
CW-complex $\Sigma(\mathbb I)$, which is also embedded in $\R^2$ and has $p(\mathbb I)$ as its set of vertices \cite[Proposition~9.4]{OPS15}.
Faces of $\Sigma(\mathbb I)$ are parametrised by some subsets of $[n]$ of cardinality $k-1$ and $k+1$.
Every edge is in the boundary of two faces, one corresponding to $k-1$ and one to $k+1$.
Moreover, we have that $\Sigma(\mathbb I)$ is homeomorphic to a disk \cite[Theorem~9.12 and Theorem~11.1]{OPS15}.

We will need the following lemma.

\begin{lemma}
  \label{lem:divides}
  Let $I$ be a subset of $[n]$ of size $l$, and assume that $I = I\+{n}k$. Then $d \mid l$, where $d = n/\on{GCD}(k,n)$.
\end{lemma}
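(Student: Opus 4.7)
The plan is to view adding $k\pmod n$ as a permutation $\sigma$ of $[n]$ and use the orbit decomposition. Concretely, define $\sigma:[n]\to[n]$ by $\sigma(x) = x\+{n}k$; this is a bijection, and its iterates are $\sigma^m(x) = x\+{n}(mk\bmod n)$. The $\sigma$-orbit of any $x\in[n]$ is $\{x, x\+{n}k, x\+{n}2k, \dots\}$, and its size is the least positive $m$ with $mk\equiv 0\pmod n$, namely $m = n/\on{GCD}(k,n) = d$. In particular, every $\sigma$-orbit in $[n]$ has size \emph{exactly} $d$, so the orbits partition $[n]$ into $\on{GCD}(k,n)$ blocks of size $d$.

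Now the hypothesis $I = I\+{n}k$ says that $\sigma(I) = I$, i.e.\ $I$ is $\sigma$-invariant. A $\sigma$-invariant subset is necessarily a union of $\sigma$-orbits: indeed, if $x\in I$, then $\sigma^m(x)\in I$ for every $m\in\Z$, so the entire orbit of $x$ lies in $I$. Hence $I$ decomposes as a disjoint union of orbits, each of cardinality $d$, which immediately gives $d\mid |I| = l$.

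There is no real obstacle here; the statement is essentially the orbit-counting observation that a set invariant under a free cyclic action has cardinality divisible by the order of the action. The only mild point to verify carefully is that every orbit has size $d$ (not just a divisor of $d$), which follows from the fact that $\sigma$ acts by translation on $\Z/n\Z$, so all orbits have the common size equal to the order of $k$ in $\Z/n\Z$.
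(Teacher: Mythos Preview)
Your proof is correct and is essentially the same as the paper's: both observe that the map $x\mapsto x\+{n}k$ has all orbits of size exactly $d$, that the hypothesis makes $I$ a union of such orbits, and hence $d\mid l$. The only cosmetic difference is that the paper phrases it on $\Z/n\Z$ while you work directly on $[n]$.
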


\begin{proof}
  Observe that $d$ is the order of the function $\varphi:a\mapsto a+k$ on $\Z/n\Z$. Moreover, every element of $\Z/n\Z$ has the same order under $\varphi$. 
  Seeing $I$ as a subset of $\Z/n\Z$, we get then that $\varphi(I) = I$ and so $I$ is a union of $\varphi$-orbits. Since all $\varphi$-orbits have size $d$, 
  we get the claim.
\end{proof}

\begin{proof}[Proof of Proposition \ref{prop:necessity}.]
  If $k \in \left\{ 0, 1, n-1, n \right\}$, then Condition~$\ref{cond:star}$ is satisfied. Therefore, assume in the following that $1<k<n-1$.

  Since $\mathbb I$ is symmetric, we know that $\mathbb I = \mathbb I\+{n}k$. Denoting by $\rho$ the clockwise rotation by $\frac{2\pi k}{n}$ centered at the origin,
  we have by definition 
  $$p(e_{i\+{n}k}) = v_{i\+{n}k} = \rho(v_i) = \rho\left( p(e_i) \right).$$
  This in turn implies that $\rho(\Sigma(\mathbb I)) = \Sigma(\mathbb I)$. 
  Observe that $d = n/\on{GCD}(k,n)$ is the order of $\rho$, and that $1<d<n$.
  In particular, observe that $\Sigma(\mathbb I)$ must contain the origin since it is a disk.

  Let us look at the origin $\bar0 \in \R^2$, which is the only fixed point of $\rho$. Three cases can happen:
  \begin{enumerate}
    \item $\bar0 = p(e_I)$ for some $I\in \mathbb I$. Then $I = I\+{n}k$, which implies that $d$ divides $|I| = k$ by Lemma \ref{lem:divides}. So
      Condition~$\ref{cond:star}$ is satisfied in this case.
    \item $\bar 0$ is not a vertex of $\Sigma(\mathbb I)$, but it lies on an edge. This can only happen if $d = 2$, but in this case the two faces having $\bar 0$ on their boundary cannot 
      be sent to each other by $\rho$ since their parameters have different cardinalities. So this case does not occur.
    \item $\bar 0$ is in the interior of a face $F$ of $\Sigma(\mathbb I)$. In this case we must have that $\rho(F) = F$, which implies that the vertices of $F$ are permuted by $\rho$. 
    These lie on a circle centered at $p(K)$, where $K$ is the label of $F$ \cite[Theorem~5.7(a)]{BKM16}.
      It follows that $\bar 0$ must be the center of this circle, so $\bar 0 = p(K)$ and thus $K = K\+{n}k$ since $\rho(\bar 0) = \bar 0$. This implies 
      that $d$ divides $|K|$ by Lemma \ref{lem:divides}. Since $|K|$ is either $k-1$ or $k+1$ being the label of $F$, Condition~$\ref{cond:star}$ is satisfied.
  \end{enumerate}
  So in all possible cases Condition~$\ref{cond:star}$ holds, hence the claim is proved.
\end{proof}

\section{Setup and notation}\label{sec:setup}
In this section we fix notation and prove some auxiliary results which we will need later. 
If $X$ is cyclically ordered and $a, b\in X$, define the set $[a,b]\subseteq X$ to be the smallest subset such that $a, b\in [a,b]$ and 
$S_X\left( [a, b]\setminus\left\{ b \right\} \right)\subseteq [a,b]$. We call a set of this type an \emph{interval} of $X$. 
Observe that $[a,b] = X$ if and only if $S_X(b) = a$, and otherwise $a$ and $b$ are uniquely determined by $[a,b]$. If $I\neq X$ we call $I$ a \emph{proper interval}.
We will write $[a,b]_X$ instead of $[a,b]$ to specify the set $X$ if needed.

For every $i\in X$, there is an associated linear order $<_i$ on $X$ defined by
\begin{align*}
  i <_i S_X(i)<_i S_X^2(i) <_i \cdots <_i S_X^{-1}(i).
\end{align*}
Observe that $a<_ib<_ic$ implies $a<_\circ b<_\circ c$ for all $a,b,c\in X$. There is a bijection between linear orders on $X$ with 
this property on the one hand and elements $i$ of $X$ on the other hand.
If $I =[a,b]\subseteq X$ is a proper interval, the \emph{linear order associated to $I$} is the order $<_{a}$.

\begin{lemma}\label{lem:interval}
  Let $I$ be a proper interval of $X$, and let $<_I$ be the linear order associated to $I$. If $a,c\in I$ and $b \in X$ are such 
  that $a<_Ib<_Ic$, then $b\in I$.
\end{lemma}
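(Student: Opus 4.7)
The plan is to unpack the definitions and observe that, relative to its associated linear order, a proper interval is precisely an initial segment, after which the lemma reduces to a triviality about initial segments of a linear order.

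First I would write $I = [a_0, b_0]$ with $a_0, b_0 \in X$ uniquely determined (this uses that $I$ is proper, as noted in the paragraph preceding the lemma). The associated linear order is then $<_I \,= \,<_{a_0}$, for which $a_0$ is the minimum and $S_X^{-1}(a_0)$ is the maximum. The key observation is that
\begin{align*}
  I = \{x \in X : a_0 \le_{a_0} x \le_{a_0} b_0\},
\end{align*}
i.e.\ $I$ is exactly the $<_{a_0}$-initial segment ending at $b_0$. This follows by induction on the distance from $a_0$ to $b_0$ measured by iterations of $S_X$: the recursive clause $S_X([a_0,b_0]\setminus\{b_0\})\subseteq [a_0,b_0]$, together with minimality of $[a_0,b_0]$ among sets containing $a_0$ and $b_0$ with this closure property, forces $[a_0,b_0]$ to be precisely $\{a_0, S_X(a_0), S_X^2(a_0), \dots, b_0\}$, which is the claimed initial segment.

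Granted this description of $I$, the lemma is immediate: given $a, c \in I$ and $b \in X$ with $a <_I b <_I c$, we have $a_0 \le_I b$ automatically since $a_0$ is the minimum of $<_I$, while $b <_I c \le_I b_0$ gives $b \le_I b_0$. Therefore $b \in I$.

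The only step that requires any thought is verifying the initial-segment characterization of $I$; the rest is formal. I do not expect any genuine obstacle here, since the characterization follows straight from the minimal-closure definition of $[a_0,b_0]$ and the definition of $<_{a_0}$ via iterates of $S_X$.
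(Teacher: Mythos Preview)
Your proof is correct and is exactly the unpacking of definitions that the paper has in mind; the paper's own proof is simply the one-line assertion ``Immediate from the definition of interval and of $<_I$.'' Your observation that a proper interval $[a_0,b_0]$ coincides with the $<_{a_0}$-initial segment $\{a_0, S_X(a_0), \dots, b_0\}$ is precisely what makes it immediate, and you have verified this carefully.
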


\begin{proof}
  Immediate from the definition of interval and of $<_I$.
\end{proof}

\begin{ex}
  Let $n = 8$. Then $I = \left\{ 7,8,1,2,3 \right\}$ is an interval of $[n]$. The linear order associated to $I$ is
  $7<8<1<2<3<4<5<6$. Let $Q = \{1,3,4,6,7\}$. Then $I\cap Q = \left\{ 7,1,3 \right\}$ is an interval of $Q$.
\end{ex}

\begin{lemma}\label{lem:intnoncross}
  Let $X$ be cyclically ordered, and let $I$ be an interval of $X$. Let $a<_\circ b<_\circ c<_\circ d \in X$ with $a,c\in I$. 
  Then $b\in I$ or $d\in I$.
\end{lemma}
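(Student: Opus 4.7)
The plan is to reduce to the case where $I$ is a proper interval (the case $I = X$ is immediate since then $b, d \in X = I$). Once $I$ is proper, I can work with its associated linear order $<_I$ and apply Lemma \ref{lem:interval}.

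The key idea is that the two possible conclusions $b \in I$ and $d \in I$ correspond exactly to the two possible relative orderings of $a$ and $c$ under $<_I$. Concretely, I would split into cases as follows. Write $I = [p,q]$ so that $<_I = <_p$, and note that $a, c \in I$. Since $a \neq c$, either $a <_I c$ or $c <_I a$.

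In the first case ($a <_I c$), I would argue that the cyclic forward arc $\{S_X(a), S_X^2(a), \ldots\}$ from $a$ to $c$ (i.e., the elements strictly between $a$ and $c$ when going forward from $a$ until reaching $c$) consists precisely of the elements $x$ with $a <_I x <_I c$; this is because no "wrap" through $S_X^{-1}(p) \to p$ occurs while $a <_I c$ stay within $[p, q]$. Since $a <_\circ b <_\circ c <_\circ d$ forces $b$ to lie in this forward arc, we get $a <_I b <_I c$, and Lemma \ref{lem:interval} then yields $b \in I$. In the second case ($c <_I a$), the symmetric argument places $d$ in the cyclic forward arc from $c$ to $a$, giving $c <_I d <_I a$ and hence $d \in I$ by Lemma \ref{lem:interval}.

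The only genuinely non-routine point—and the main obstacle, though a mild one—is justifying the identification between the cyclic forward arc from one point of $I$ to another and the set of elements strictly between them under $<_I$. This reduces to checking that the linear order $<_p$ agrees with "forward successorship" on $X$ everywhere except across the single step $S^{-1}(p) \mapsto p$, and that such a wrap cannot occur while moving forward from a $<_I$-smaller element of $I$ to a $<_I$-larger element of $I$ (since all intermediate steps remain inside $[p,q]$). With this observation in hand, both cases are immediate applications of Lemma \ref{lem:interval}.
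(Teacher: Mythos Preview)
Your proposal is correct and follows essentially the same route as the paper: dispose of $I=X$, then split on $a<_I c$ versus $c<_I a$ and apply Lemma~\ref{lem:interval} once you have placed $b$ (resp.\ $d$) strictly between them in $<_I$. The only difference is in executing that middle step: where you argue via identifying the forward cyclic arc with the $<_I$-interval, the paper gets $a<_I b<_I c$ more directly by contradiction, using that $x<_I y<_I z$ implies $x<_\circ y<_\circ z$ to rule out $b<_I a$ and $c<_I b$.
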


\begin{proof}
  If $I = X$ then $b\in I$ and we are done. Otherwise, let $<_I$ be the linear order associated to $I$. Assume $a<_I c$. If $b<_I a$, 
  then $b<_\circ a<_\circ c$, contradiction. If $c<_I b$, then $a<_\circ c<_\circ b$, contradiction. So we must have $a<_Ib<_Ic$ hence
  $b\in I$. In the same way we obtain that $d\in I$ if we assume that $c<_Ia$.
\end{proof}

\begin{lemma}
  Let $X$ be a cyclically ordered set, $I$ an interval of $X$, and $J\subseteq X$. Then $I$ and $J$ are noncrossing.
\end{lemma}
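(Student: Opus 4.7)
The plan is to reduce the statement directly to Lemma~\ref{lem:intnoncross}. Suppose for contradiction that $I$ and $J$ are crossing. By definition, there exist $a <_\circ b <_\circ c <_\circ d$ in $X$ with $a, c \in I \setminus J$ and $b, d \in J \setminus I$. In particular $a, c \in I$, so Lemma~\ref{lem:intnoncross} applies and yields $b \in I$ or $d \in I$.

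But both $b$ and $d$ were chosen in $J \setminus I$, hence neither lies in $I$. This contradicts the conclusion of Lemma~\ref{lem:intnoncross}, so no such $a, b, c, d$ can exist, and $I, J$ are noncrossing.

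There is essentially no obstacle here: the content of the claim has already been packaged into Lemma~\ref{lem:intnoncross}, which says that an interval cannot be ``jumped over'' twice in cyclic order without one of the jumping points landing inside the interval. The statement being proved is simply the reformulation of this fact in the language of the crossing/noncrossing definition, so the whole proof is a one-line contradiction argument. The only thing to be careful about is handling the degenerate case $I = X$, but Lemma~\ref{lem:intnoncross} already accounts for this in its own proof, so no separate case analysis is needed here.
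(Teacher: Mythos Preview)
Your proof is correct and follows exactly the paper's own argument: assume a crossing witness $a<_\circ b<_\circ c<_\circ d$ with $a,c\in I$, invoke Lemma~\ref{lem:intnoncross} to force $b\in I$ or $d\in I$, and derive a contradiction with $b,d\in J\setminus I$. There is no meaningful difference between your argument and the paper's.
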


\begin{proof}
  Since $I$ is an interval, if $a<_\circ b<_\circ c<_\circ d\in X$ with $a, c\in I$, then either $d$ or $b$ are in $I$ by Lemma~
  \ref{lem:intnoncross}. So $I$ and $J$ cannot be crossing.
\end{proof}

\section{Construction: the case $n = dk$}\label{sec:specialconstruction}
In order to prove Theorem~\ref{thm:main}, we will construct a symmetric maximal $(k,n)$-noncrossing collection whenever $(k,n)$ satisfies Condition~$\ref{cond:star}$. 
The construction will be performed in two steps: first we will make the additional assumption that $k \mid n$, and in Section~\ref{sec:construction} we 
will show how one can get rid of this assumption.

We first give a construction of a symmetric maximal noncrossing collection when $n = dk$ and $(k,dk)$ satisfies Condition~$\ref{cond:star}$.
Observe that in this case we have $\on{GCD}(k,n) = k$, so $n/\on{GCD}(k,n) = d$.
By assumption, $k = dp+c$, with $c\in \left\{ -1,0,1 \right\}$. For $a \in [n]$, write
$\overline a =  (a +k\Z) \cap [n]$. 
Choose a total order $\overline {a_1} < \overline{a_2} <\cdots <\overline{a_{k}}$ on these congruence classes (for simplicity, 
we assume that $\{a_1, \ldots, a_k\}=\{1, \ldots, k\}$). We construct collections $\mathcal L_s$, for $1\leq s \leq k-p+1$, in the 
following way.

Call $P_s = [n]\setminus \bigcup_{i =1}^{s-1}\overline{a_i}$, considered as a cyclically ordered subset of $[n]$. 
For $1\leq h\leq d$, write $P_{s,h} = P_s \setminus \left\{ S_{\overline{ a_s}}^m (a_s)\,|\, h\leq m< d \right\}$.
For fixed $h$ and $i \in [S_{P_s}(a_s-k),a_s]$, define 
$$
I(i,h)  = \{ i, S_{P_{s,h}}(i), \ldots, S_{P_{s,h}}^{k-1}(i)\}.
$$
We are interested in $k$-element sets, and it is easy to see that $|I(i,h)| = k$ if and only if 
$|P_{s,h}|\geq k$ (in particular, the cardinality of $I(i,h)$ is independent of $i$). However, we note that different choices of $h$ 
may give rise to the same set $I(i,h)$. In particular it is clear that if $P_s$ is big (in comparison to $k$), then large 
values of $h$ will give the same set $I(i,h)$ for fixed $i$. Therefore, for given values of $i$ and $h$ we set $h^*$ to be the minimal 
$h'$ for which $I(i,h') = I(i,h)$. This element $h^*$ can be explicitly determined: we have that $h^* = |\overline{a_s}\cap I(i,h)|$, and that $h^*$ is the unique $h'$ such that
$S^{h'-1}_{P_{s,h}}(a_s)\in I(i,h)$.

Let $B_s$ be the collection defined by
$$
B_s = \left\{ I(i,h)\, |\, i \in [S_{P_s}(a_s-k),a_s],\, 1\leq h\leq d,\, |I(i,h)| = k\right\}.
$$
We define $\mathcal L_s = \left\{I\+{dk} xk\,|\,I \in B_s, x\in \Z \right\}$ and $\mathbb I = \bigcup_{s =1}^{k-p+1}\mathcal L_s$. 

\begin{rmk}
  There is a way of generating all the elements of $B_s$, which we explain informally. We start with $\{a_s\}$. We keep adding successors 
  in $P_s$ until we have a $k$-element set $I$ (which is an interval of $P_s$). This is our first set in $B_s$. If it contains an 
  element in $\overline{a_s}$ which is not $a_s$, then we can generate another set in $B_s$ by removing the last (in the order 
  $<_{a_s}$ of $\overline{a_s}$) such element, and adding another element at the end of $I$. However, we cannot add an element of 
  $\overline{a_s}$ in this way, so if the next element we would add is in $\overline{a_s}$ we skip it and add the next one instead.
  Thus we get another set in $B_s$. 
  
  Now if the latest set we constructed still has an element in $\overline{a_s}$ which is not $a_s$, we can remove the last such and add another element at the end, and thus produce another set in $B_s$.
  We can repeat this until we get a set $I$ such that $I\cap \overline{a_s} = \left\{ a_s \right\}$.

  Now we can start the whole construction again, beginning with $\left\{ S^{-1}_{P_s}(a_s), a_s \right\}$. We add successors until we have $k$ elements, and then we modify our resulting set by removing elements in $\overline{a_s}$ and
  adding elements at the end. We get some more sets in $B_s$ in this way. 
  Repeating this, starting with the various intervals $[S^{-x}_{P_s}(a_s), a_s]$, we get all the sets in $B_s$. Observe that the last $x$ we try is $x = k-1$.
\end{rmk}

\begin{ex}
  Let us illustrate the construction with a concrete example. Let $(k,n) = (7,28)$, so that $d = 4$ and Condition~$\ref{cond:star}$ is satisfied (this does not play a role here).
  We fix a total order $\bar 4< \bar 6<\bar 7<\bar 2<\bar 1<\bar 3<\bar 5$ on the congruence classes modulo $7$. We take $s = 4$, so that $a_s = 2$ and 
  $P_s = \left\{ 1,2,3,5,8,9,10,12,15,16,17,19,22,23,24,26 \right\}$.
  In Figure \ref{fig:example728} we draw the set $P_4$ on the circle, with crosses to indicate the elements of $[28]\setminus P_4$.
  The orbit $\overline{ a_s} = \bar 2 = \left\{ 2,9,16,23 \right\}$ is highlighted. 
  
  The arcs represent the $7$ elements of the set $B_4$, namely
  \begin{align*}
&\left\{ 24,26,1,2,3,5,8 \right\}, 
\left\{ 26,1,2,3,5,8,9 \right\},
\left\{ 26,1,2,3,5,8,10 \right\},
\left\{ 1,2,3,5,8,9,10 \right\},\\
&\hspace{1.3cm}  \left\{ 1,2,3,5,8,10,12 \right\},
  \left\{ 2,3,5,8,9,10,12 \right\},
  \left\{ 2,3,5,8,10,12,15 \right\}.
\end{align*}
Observe that they all contain $2$ and that some of them but not all contain $9$. 
The set $\mathcal L_4$ consists of all the shifts of the $7$ sets above by multiples of $7$ modulo $28$, and thus has $28$ elements.
The reader is invited to pick two sets in $B_4$ and check that they are noncrossing, and to do the same with two arbitrary sets in $\mathcal L_4$.
\begin{figure}
\[
\begin{tikzpicture}[baseline=(bb.base),scale=.8,
    cross/.style={cross out, draw, 
	 minimum size=2*(#1-\pgflinewidth), 
	 inner sep=0pt, outer sep=0pt},
	 arcsegment/.style = {blue, thick},
  ]

\newcommand{\nth}{360/28}
\newcommand{\radius}{4cm} 
\newcommand{\eps}{11pt} 
\newcommand{\dotrad}{0.04cm} 

\path (0,0) node (bb) {};


\draw (0,0) circle(\radius) [thick, densely dotted];


\foreach \n in {1,...,28}
{
\coordinate (b\n) at (-\nth*\n+90:\radius);
}


\foreach \n in {1,2,3,5,8,9,10,12,15,16,17,19,22,23,24,26}
{
  \draw (-\nth*\n+90:\radius + \eps) node {$\n$};
  \draw  (b\n) circle(\dotrad) [fill = black];
}


\foreach \n in {4,6,7,11,13,14,18,20,21,25,27,28}
{
  \draw (b\n) node[cross= 5pt, rotate =-\n*\nth, thick] {};
}

\foreach \n in {2,9,16,23}
{
\draw  (b\n) circle(\dotrad*3) [fill = none];
}


\draw [arcsegment](-1.5*\nth:\radius*.9) arc (-1.5*\nth:11.5*\nth:\radius*.9);
\draw [arcsegment] (-2.5*\nth:\radius*.8) arc (-2.5*\nth:9.5*\nth:\radius*.8);
\draw [arcsegment](-3.5*\nth:\radius*.7) arc (-3.5*\nth:9.5*\nth:\radius*.7);

\draw [arcsegment] (-3.5*\nth:\radius*1.2) arc (-3.5*\nth:6.5*\nth:\radius*1.2);
\draw [arcsegment] (-5.5*\nth:\radius*1.3) arc (-5.5*\nth:6.5*\nth:\radius*1.3);
\draw [arcsegment] (-5.5*\nth:\radius*1.4) arc (-5.5*\nth:5.5*\nth:\radius*1.4);
\draw [arcsegment] (-8.5*\nth:\radius*1.5) arc (-8.5*\nth:5.5*\nth:\radius*1.5);


\draw [arcsegment] (-1.5*\nth:\radius*.9)--(-1.5*\nth:\radius*.95);
\draw [arcsegment] (11.5*\nth:\radius*.9)--(11.5*\nth:\radius*.95);
\draw [arcsegment] (-2.5*\nth:\radius*.8)--(-2.5*\nth:\radius*.85);
\draw [arcsegment] (9.5*\nth:\radius*.8)--(9.5*\nth:\radius*.85);
\draw [arcsegment] (-3.5*\nth:\radius*.7)--(-3.5*\nth:\radius*.75);
\draw [arcsegment] (9.5*\nth:\radius*.7)--(9.5*\nth:\radius*.75);

\draw [arcsegment] (-3.5*\nth:\radius*1.2) -- (-3.5*\nth:\radius*1.15);
\draw [arcsegment] (6.5*\nth:\radius*1.2) -- (6.5*\nth:\radius*1.15);
\draw [arcsegment] (-5.5*\nth:\radius*1.3) -- (-5.5*\nth:\radius*1.25);
\draw [arcsegment] (6.5*\nth:\radius*1.3) -- (6.5*\nth:\radius*1.25);
\draw [arcsegment] (-5.5*\nth:\radius*1.4) -- (-5.5*\nth:\radius*1.35);
\draw [arcsegment] (5.5*\nth:\radius*1.4) -- (5.5*\nth:\radius*1.35);
\draw [arcsegment] (-8.5*\nth:\radius*1.5) -- (-8.5*\nth:\radius*1.45);
\draw [arcsegment] (5.5*\nth:\radius*1.5) -- (5.5*\nth:\radius*1.45);


\draw (-2*\nth:\radius*.7) circle(\dotrad*2) [arcsegment,fill = white];
\draw (-2*\nth:\radius*1.3) circle(\dotrad*2) [arcsegment,fill = white];
\draw (-2*\nth:\radius*1.5) circle(\dotrad*2) [arcsegment,fill = white];

 \end{tikzpicture}
\]
\caption{An example of our construction for $(k,n) = (7,28)$ and $s = 4$.}
\label{fig:example728}
\end{figure}

\end{ex}

Our claim is that $\mathbb I$ is a symmetric maximal $(k,n)$-noncrossing collection. 
To prove this, we will show that it consists of mutually noncrossing sets and that $|\mathbb I| = k(n-k)+1$. 
Thus we will be able to conclude that the claim holds using Theorem~\ref{thm:ops}. 
In this process, the only step that uses Condition~\ref{cond:star} is checking the cardinality of the last nonempty $\l_s$.

\begin{lemma}
  If $s\neq t$, then $\mathcal L_s \cap \mathcal L_t =  \varnothing$. 
\end{lemma}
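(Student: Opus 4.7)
The plan is to exploit the fact that each $\mathcal{L}_s$ can be recognised from the intersection pattern of its members with the $k$ congruence classes $\overline{a_1}, \ldots, \overline{a_k}$ modulo $k$. More precisely, I will show that every set in $\mathcal{L}_s$ contains at least one element of $\overline{a_s}$ and is disjoint from $\overline{a_i}$ for every $i<s$. These two properties, taken together, determine the index $s$ uniquely, so for $s\neq t$ the collections $\mathcal{L}_s$ and $\mathcal{L}_t$ must be disjoint.

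The first step is to establish the two properties for the base sets $I \in B_s$. Disjointness from $\overline{a_i}$ with $i<s$ is immediate from the definition: $I\subseteq P_{s,h}\subseteq P_s = [n]\setminus\bigcup_{i<s}\overline{a_i}$. The assertion $a_s\in I$ is the main point. It holds because the interval $[S_{P_s}(a_s-k),a_s]_{P_s}$, in which $i$ is required to lie, has at most $k$ elements, so starting from any such $i$ and taking $k-1$ successors in $P_{s,h}$ one necessarily passes through $a_s$. The size bound itself follows by a counting argument: the cyclic segment $\{a_s-k+1,\ldots,a_s\}$ in $[n]$ is a complete system of residues modulo $k$, of which exactly $s-1$ are deleted in passing to $P_s$, leaving $k-s+1\le k$ elements.

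The second step is to transfer both properties from $B_s$ to the whole of $\mathcal{L}_s$. Since the map $J\mapsto J\+{n}xk$ permutes each residue class $\overline{a_i}$ setwise, the intersection pattern of $J$ with any $\overline{a_i}$ is preserved by such a shift. Consequently every $I'\in\mathcal{L}_s$ still satisfies $I'\cap\overline{a_s}\neq\varnothing$ and $I'\cap\overline{a_i}=\varnothing$ for all $i<s$.

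The conclusion is then formal. Assuming without loss of generality that $s<t$, if $I'\in\mathcal{L}_s\cap\mathcal{L}_t$ then membership in $\mathcal{L}_s$ gives $I'\cap\overline{a_s}\neq\varnothing$, while membership in $\mathcal{L}_t$ (together with $s<t$) forces $I'\cap\overline{a_s}=\varnothing$, a contradiction. The only genuinely nontrivial verification along the way is the counting argument guaranteeing $a_s\in I(i,h)$; everything else is bookkeeping with the definitions.
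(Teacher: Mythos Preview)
Your proof is correct and follows the same approach as the paper's: both argue that every member of $\mathcal{L}_s$ meets $\overline{a_s}$ while every member of $\mathcal{L}_t$ (for $t>s$) avoids it. The paper's version is terser---it simply asserts that $a_s\in I$ for every $I\in B_s$ without justification---whereas you supply the counting argument showing that the interval $[S_{P_s}(a_s-k),a_s]_{P_s}$ has exactly $k-s+1\le k$ elements, which is the substantive point. One small gap worth closing: when you say ``taking $k-1$ successors in $P_{s,h}$ one necessarily passes through $a_s$'', you are implicitly using that no element of $\overline{a_s}\setminus\{a_s\}$ lies in the interval $[S_{P_s}(a_s-k),a_s]_{P_s}$, so the $P_{s,h}$-successors agree with the $P_s$-successors along the way from $i$ to $a_s$; this is true (the only candidate $a_s-k$ is excluded by starting at its successor) but deserves a sentence.
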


\begin{proof}
  By symmetry, assume $s<t$. Since $\overline{a_s}\in I$ for every $I\in B_s$, every element in $\mathcal L_s$ contains some $a\in \overline{a_s}$.
  On the other hand, no element in $\mathcal L_t$
  contains such an element $a$.
\end{proof}

Now we count the number of elements in each collection $\l_s$.

\begin{prop}\label{prop:counting}
 \  
  \begin{enumerate}
    \item For all $s<k-p$, we have $|\mathcal L_s| = kd$.
    \item If $k\equiv -1$ or $k\equiv 0 \pmod d$, then $|\mathcal L_{k-p}| = kd$.
    \item If $k \equiv 1\pmod d$, then $|\mathcal L_{k-p}| = d(k-p)$.
  \end{enumerate}
\end{prop}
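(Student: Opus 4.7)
My plan has two main steps. First I would count $|B_s|$ directly, and second I would show that $|\mathcal L_s| = d \cdot |B_s|$ by verifying that the $\langle +k \rangle$-action on $B_s$ has trivial stabilisers and that distinct elements of $B_s$ lie in distinct $\langle +k \rangle$-orbits.

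For the first step I parametrise the elements of $B_s$ by pairs $(i, h^*)$, where $i \in Q := [S_{P_s}(a_s - k), a_s]_{P_s}$ is the starting point in $P_s$ and $h^* = |I \cap \overline{a_s}|$. Writing $i = S_{P_s}^{-t}(a_s)$ with $0 \leq t \leq k - s$, and noting that the walk from $i$ meets the element $a_s + jk$ at $P_{s, h}$-position $t + j(k - s + 1)$ (for $0 \leq j < h$), the number of distinct sets $I(i, h)$ produced from $i$ as $h$ varies is
\[
 N_i = \min\!\bigl(d,\ \lceil (k - t)/(k - s + 1) \rceil\bigr).
\]
A short estimate shows $\lceil (k - t)/(k - s + 1) \rceil \leq d$ throughout the range $s \leq k - p$, so the $\min$ can be dropped. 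Next, $(i, h^*)$ determines $I(i, h)$ uniquely unless $|P_{s, h^*}| = k$, in which case $I = P_{s, h^*}$ does not depend on $i$. Solving $(k - s)d + h^* = k$ in the range $s \leq k - p$, $h^* \in \{1, \dots, d\}$ shows this degeneracy occurs exactly in case (3), with $s = k - p$ and $h^* = 1$. Consequently
\[
 |B_s| = \sum_{v = s}^{k} \lceil v/(k - s + 1) \rceil - (\text{overcount}),
\]
and the identity $\sum_{v = s}^{k} \lceil v/(k - s + 1) \rceil = k$, which follows from the standard formula for $\sum_{v=1}^{N} \lceil v/m \rceil$, gives $|B_s| = k$ in cases (1), (2). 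In case (3) one subtracts the overcount $|Q| - 1 = p$, giving $|B_s| = k - p$.

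For the second step I would use the key observation that the map $x \mapsto x + k$ acts on $P_s$ as a cyclic rotation by $k - s + 1$ positions (because $P_s$ is $+k$-invariant and the orbit of $a_s$ lands at $P_s$-positions $0, k-s+1, 2(k-s+1), \dots$). Therefore $Q$ and its translates $Q + jk$ for $j = 1, \dots, d - 1$ are pairwise disjoint in $P_s$: $Q + jk$ occupies the positions $(j-1)(k-s+1) + 1, \dots, j(k-s+1)$, while $Q$ occupies the ``last'' $k - s + 1$ positions including $0$. Now if $I, I' \in B_s$ and $I' = I + jk$ with $1 \leq j \leq d - 1$, then $I' \cap \overline{a_s} = (I \cap \overline{a_s}) + jk$ forces $h^*_I = d$ (otherwise this intersection is not of the form $\{a_s, a_s + k, \dots, a_s + (h^*_{I'} - 1)k\}$ required for membership in $B_s$), so $I'$ is an interval in $P_s$ with starting point $i + jk$; but $i + jk \in Q + jk$ is disjoint from $Q$, contradicting $I' \in B_s$. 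This simultaneously rules out (a) nontrivial stabilisers of elements of $B_s$ under $\langle +k \rangle$ and (b) distinct elements of $B_s$ lying in the same orbit, so $|\mathcal L_s| = d \cdot |B_s|$, which yields $kd$ in cases (1) and (2) and $d(k - p)$ in case (3).

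The main technical obstacle is identifying the unique degeneracy $|P_{s, 1}| = k$ that occurs in case (3) and verifying it is the only source of overcounting; once the positional action of $+k$ on $P_s$ is isolated, the second step of the plan is direct.
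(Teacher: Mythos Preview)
Your proposal is correct and follows essentially the same two-part strategy as the paper: first reduce to counting $|B_s|$ via the identity $|\mathcal L_s|=d\,|B_s|$, then parametrise $B_s$ by pairs $(i,h^*)$ and sum. Your ceiling expression $\lceil (k-t)/(k-s+1)\rceil$ is exactly the paper's $\gamma_i=\lfloor (k+i-1)/r\rfloor$ under the substitution $r=k-s+1$, $i=r-t$, and the identification of the unique degeneracy $|P_{s,1}|=k$ in case~(3) matches the paper's analysis; your treatment of the orbit step is slightly more explicit than the paper's, but uses the same observation that $I\cap\overline{a_s}$ is an interval of $\overline{a_s}$ beginning at $a_s$.
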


\begin{proof}

We start by remarking that in the cases we consider we have $|\l_s| = d|B_s|$. Indeed, let $I\in B_s$. Then $J = I\cap \overline{a_s}$ is an
interval of $\overline{a_s}$. The sets $I\+{n} kx$ intersect $\overline{a_s}$ in $J\+{n}kx$, and the only case 
in which these $d$ intervals of $\overline{a_s}$ are not all distinct is when $J = \overline{a_s}$, that is when $I$ is an interval of $P_s$. 
Now, by the same argument, the $d$ intervals $I+kx$ of $P_s$ are all distinct unless $I = P_s$.
In particular, if $|\l_s| \neq d|B_s|$, we must have $|P_s| = k$. 
Since $|P_s| = d(k-s+1)$, this can only happen (assuming Condition~\ref{cond:star}) if $k = dp$ with $p = k-s+1$. 
In particular, in the cases we consider in assertions $(1)$--$(3)$, we must have $|\l_s| = d|B_s|$.
As a consequence, it suffices to compute the cardinality of $B_s$.

To count the elements of $B_s$, we will identify a suitable domain that makes the function $(i,h)\mapsto I(i,h)$ injective, and
count the elements of this domain.

Recall that to each pair $(i,h)$ we can assign the pair $(i,h^*)$, where $h^*$ is such that $I(i,h) = I(i,h^*)$ and minimal with respect to this
property.

On the other hand, if $|P_{s,h}|>k$, we can recover $i$ from the set $I(i,h)$. This is because the set $I(i,h)$ is an interval of 
$P_{s,h}$, and $i$ is its starting element.

If $|P_{s,h}| = k$, then the sets $I(i,h)$ are all equal to $P_{s,h}$, so these cases will require special attention. 

Let $r = k - s + 1$. Observe that there exists a unique bijection that preserves cyclic order from $P_s$ to $[dr]$ 
such that $a_s$ is sent to $r$. Therefore we will assume in the following that $P_s = [dr]$. 

It will be convenient to fix $i\in [1, r]$ and let $h$ vary.
We want to count, for a fixed $i$, how many sets $I(i,h)$ there are such that $h = h^*$ (to avoid double counting). 
In other words, how many sets $I(i,h)$ there are such that $rh \in I(i,h)$.
The set $I(i,h)$ must contain the interval $[i,rh]$, so from $|I|= k$ we get $k \geq rh-i+1$.
Setting 
$$\gamma_i = \left\lfloor \frac{k+i-1}{r}\right\rfloor,$$ 
we obtain $h\leq \gamma_i$. We conclude that for a fixed $i\in [1,r]$ there are exactly $\gamma_i$ distinct values of $h$ 
such that $I(i,h)$ has size $k$, and thus $\gamma_i$ elements in $B_s$. 

As we pointed out, one can recover $i$ from $I(i,h)$ unless $|P_{s,h}| = k$, which means that 
$|B_s|= \sum_{i=1}^r\gamma_i$ unless $|P_{s,h}| = k$.
Let us analyse the special case $|P_{s,h}| = k$, distinguishing between the three congruences permitted by Condition~\ref{cond:star}.

By construction, $|P_{s,h}| = dk-ds + h$, and recall that $1\leq h\leq d$. Now there are three cases, assuming $|P_{s,h}|= k$:
\begin{itemize}
  \item If $k = dp+1$, then $h= 1$ and $p =k-s$.
  \item If $k = dp$, then $h = d$ and $p = k-s+1$. 
  \item If $k = dp-1$, then $h = d-1$ and $p = k-s+1$.
\end{itemize}

To prove assertions $(1)$ and $(2)$ it is thus enough to show that 
\begin{align*}
  \sum_{i=1}^r\gamma_i =  k.
\end{align*} 
To prove this we write $k = ar + b$ where $a \in \Z$ and $0 \leq b < r$. Then, for $0 \leq j \leq r-1$, we have
 \begin{displaymath}
  \left\lfloor\frac{b+j}{r}\right\rfloor = \begin{cases}
                                            0, & \text{ if } j <r-b;\\
                                            1, & \text{ else.}
                                           \end{cases}
 \end{displaymath}
 Thus we obtain
 \begin{align*}
  \sum_{i=1}^r \gamma_i & = \sum_{i=1}^r \left\lfloor\frac{k+i-1}{r}\right\rfloor 
  \sum_{j=0}^{r-1} \left\lfloor\frac{k+j}{r}\right\rfloor  \\
  & = ar + \sum_{j=0}^{r-1}\left\lfloor\frac{b+j}{r}\right\rfloor = ar + b  \\
  & = k.
 \end{align*}

It remains to prove assertion $(3)$, so let us assume $k = dp+1$. Now it is convenient to fix $h$ and let $i$ vary.
For $h\geq 2$, we have $|P_{s,h}|>k$, which implies that we can recover $i$ from the set $I(i,h)$, which means that we 
can count as above and obtain $\sum_{i = 1}^{r}(\gamma_i-1)$ sets.
On the other hand, we also get one additional set $I(i,1) = P_{s,1}$ when $h = 1$. 
The cardinality of $B_s$ is thus
\begin{align*}
  |B_s| = 1+ \sum_{i = 1}^{r}(\gamma_i-1) = 1+ k-r = s = k-p
\end{align*}
as we claimed.

\end{proof}

Observe that if $h = d$, the sets $I(i,h)$ are actually intervals of $P_s$. In particular, $\l_s$ contains all the intervals of length $k$ of $P_s$.

\begin{prop}\label{prop:finalcounting}
  \
  \begin{itemize}
    \item If $k\equiv -1 \pmod d$, then $|\mathcal L_{k-p+1}| = k+1$.
    \item If $k\equiv 0 \pmod d$, then $|\mathcal L_{k-p+1}| = 1$.
    \item If $k\equiv 1 \pmod d$, then $\mathcal L_{k-p+1} = \varnothing$.
  \end{itemize}
\end{prop}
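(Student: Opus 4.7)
The plan is to exploit the fact that for $s = k - p + 1$ the set $P_s$ has cardinality $d(k-s+1) = dp$, which by Condition~\ref{cond:star} equals $k-1$, $k$, or $k+1$ in the three respective cases. Any set in $B_s$ must be a $k$-subset of $P_s$, so these three cases admit very different analyses; after counting $|B_s|$, I pass to $|\l_s|$ by analysing the orbit of each element of $B_s$ under $\+{n}k$. In the first two cases the counting is immediate: if $k \equiv 1 \pmod d$, then $|P_s| = k-1 < k$, so $B_s = \varnothing = \l_s$; and if $k \equiv 0 \pmod d$, then $|P_s| = k$, so the unique $k$-subset of $P_s$ is $P_s$ itself and $B_s = \{P_s\}$. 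Since $P_s = [n]\setminus \bigcup_{i<s}\overline{a_i}$ is a union of $\+{n}k$-orbits we have $P_s\+{n}k = P_s$, and hence $\l_s = \{P_s\}$ has exactly one element.

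The substantive case is $k \equiv -1 \pmod d$, where $|P_s| = k+1$. Here every $k$-subset of $P_s$ is a proper interval of $P_s$ obtained by removing a unique element, and the constraint $|I(i,h)| = k$, equivalent to $|P_{s,h}| = k - d + h \geq k$, forces $h \in \{d-1, d\}$. For $h = d$ the sets $I(i,d)$ are distinct intervals of $P_s$ of length $k$, indexed by the $p$ starting points $i \in [S_{P_s}(a_s - k), a_s]_{P_s}$; for $h = d - 1$ we obtain the single set $P_{s,d-1} = P_s \setminus \{S^{d-1}_{\overline{a_s}}(a_s)\}$. The key identity $S^{d-1}_{\overline{a_s}}(a_s) \equiv a_s - k \pmod n$ (which uses $n = dk$) shows that $P_{s,d-1}$ coincides with $I(S_{P_s}(a_s-k), d)$, so the $h = d - 1$ contribution duplicates exactly one of the $h = d$ sets and $|B_s| = p$. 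Finally, each $I \in B_s$ is a proper subset of $P_s$ omitting a single element $x$, and since every $\+{n}k$-orbit in $P_s$ has size $d$, no nontrivial shift $I \+{n} jk$ can equal $I$; the $d$ shifts of each element of $B_s$ are therefore all distinct and $|\l_s| = d|B_s| = dp = k+1$.

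The main obstacle is the bookkeeping of the $k \equiv -1$ case: isolating the unique overlap between $h = d-1$ and $h = d$, and confirming that no interval in $B_s$ is fixed by a nontrivial rotation. Both of these reduce to the elementary observation that the $\+{n}k$-action on $P_s$ is free with orbits of size $d$, so once the intervals of $P_s$ are labelled by their omitted element the analysis is essentially a one-line check.
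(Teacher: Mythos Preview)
Your argument is correct in outline and reaches the right conclusions, but it is considerably more laborious than the paper's, and there are two small slips in the $k\equiv -1$ case worth flagging.

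First, the formula $|P_{s,h}| = k - d + h$ is off by one: since here $|P_s| = dp = k+1$, one has $|P_{s,h}| = |P_s| - (d-h) = k+1-d+h$. Your conclusion $h\in\{d-1,d\}$ is nonetheless correct, since $k+1-d+h\geq k$ iff $h\geq d-1$.

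Second, to deduce $|\l_s| = d|B_s|$ you show that no single $I\in B_s$ is fixed by a nontrivial shift, but you also need that the $p$ elements of $B_s$ lie in \emph{distinct} orbits under $\+{n}k$. This is true: the elements of $B_s$ are intervals of $P_s$ omitting the $p$ elements of $[a_s-k,\,S_{P_s}^{-1}(a_s)]_{P_s}$, and since any $p$ consecutive elements of $P_s$ meet each of the $p$ orbits exactly once, the omitted points (and hence the intervals) represent all $p$ orbits.

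The paper avoids both the $B_s$ count and the orbit analysis entirely. Just before this proposition it is observed that $\l_s$ contains \emph{all} length-$k$ intervals of $P_s$ (these are exactly the sets $I(i,d)$ and their shifts). Since $\l_s$ also consists only of $k$-subsets of $P_s$, and for $|P_s|\in\{k-1,k,k+1\}$ every $k$-subset of $P_s$ is automatically an interval, the proposition follows in one line: $\l_s$ equals the set of all $k$-subsets of $P_s$, whose cardinality is $0$, $1$, or $k+1$ according to whether $|P_s|$ is $k-1$, $k$, or $k+1$. Your route via $|B_s|$ and orbit sizes works, but recognising that every $k$-subset of a $(k+1)$-set is an interval lets you bypass the $h$-analysis and the orbit bookkeeping altogether.
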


\begin{proof}
For $s = k-p+1$, the set $P_s$ has cardinality $dp$. If $k = dp -1$, then $P_s$ has $k+1$ distinct $k$-element subsets, and they are all in $\mathcal L_s$ since they are intervals of $P_s$.
  If $k = dp$, then $P_s$ itself is its only $k$-element subset, and it is in $\mathcal L_s$ since it is an interval of $P_s$.
  Finally, if $k = dp+1$, then $P_s$ has no $k$-element subsets.
\end{proof}

\begin{prop}\label{prop:cardinality}
  We have that $|\mathbb I| = k(dk-k)+1$.
\end{prop}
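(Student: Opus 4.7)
The plan is to compute $|\mathbb I|$ as the telescoping sum $\sum_{s=1}^{k-p+1}|\l_s|$, which is legitimate because the preceding lemma shows that the collections $\l_s$ are pairwise disjoint. Since $n = dk$, the target cardinality can be rewritten as $k(n-k)+1 = k^2(d-1)+1$, so the entire proof reduces to an arithmetic verification that the formulas supplied by Propositions~\ref{prop:counting} and~\ref{prop:finalcounting} sum to this quantity.

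I would then split into the three congruence cases permitted by Condition~\ref{cond:star}. When $k = dp$, Proposition~\ref{prop:counting} gives $|\l_s| = kd$ for every $s \in \{1,\dots,k-p\}$ and Proposition~\ref{prop:finalcounting} gives $|\l_{k-p+1}| = 1$, so the sum equals $(k-p)kd + 1$, which collapses to $k^2(d-1)+1$ using $pd = k$. When $k = dp-1$, the same range of $s$ contributes $(k-p)$ copies of $kd$, while now $|\l_{k-p+1}| = k+1$, and the identity $pd = k+1$ again yields $k^2(d-1)+1$ after a one-line simplification. Finally, when $k = dp+1$, only $s \in \{1,\dots,k-p-1\}$ contribute the value $kd$, the index $s = k-p$ contributes the modified value $d(k-p)$ from part~(3) of Proposition~\ref{prop:counting}, and $\l_{k-p+1}$ is empty; using $pd = k-1$ the total once more totals $k^2(d-1)+1$.

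I do not expect any serious obstacle, since this is a routine verification using the two preceding propositions. The one point requiring care is bookkeeping: the index at which the pattern $|\l_s| = kd$ breaks down shifts by one between the $k \equiv 1 \pmod d$ case (where it fails already at $s = k-p$) and the other two cases (where it persists through $s = k-p$), so one must apply the correct parts of Propositions~\ref{prop:counting} and~\ref{prop:finalcounting} in each scenario.
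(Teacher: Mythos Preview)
Your proposal is correct and follows essentially the same approach as the paper: both sum $|\mathbb I| = \sum_s |\mathcal L_s|$ using the disjointness lemma, apply Propositions~\ref{prop:counting} and~\ref{prop:finalcounting} in each of the three congruence cases, and verify the arithmetic. Your rewriting of the target as $k^2(d-1)+1$ and use of the substitutions $pd = k$, $pd = k+1$, $pd = k-1$ is a slightly cleaner bookkeeping device than the paper's full expansion, but the argument is the same.
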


\begin{proof}
  There are three cases to consider, depending on the congruence class of $k$ modulo $d$. In each case, we will combine the results of Proposition \ref{prop:counting} and Proposition \ref{prop:finalcounting}.

     If $k\equiv -1\pmod d$, then 
      \begin{align*}
	|\mathbb I| &= kd(k-p) + k+1 = (dp-1)d(dp-1-p)  \\
	 &= d^3p^2-2d^2p-d^2p^2+d+dp + dp  \\
	 &= (dp-1)^2(d-1)+1 = 	k(dk-k)+1.
      \end{align*}
    
      If $k\equiv 0\pmod d$, then 
      \begin{align*}
	|\mathbb I| = kd(k-p)+1  = d^2p(dp-p)+1 = k(dk-k)+1.
      \end{align*}
    
      If $k\equiv 1\pmod d$, then 
      \begin{align*}
	|\mathbb I| &= kd(k-p-1) +d(k-p) = kd(k-p)-dp  \\
	&= (dp+1)d(dp-p+1)-dp = d^3p^2+2d^2p-d^2p^2-2dp +d  \\
	&= (dp+1)^2(d-1)+1 = k(dk-k)+1.
      \end{align*}
\end{proof}

Proposition~\ref{prop:cardinality} shows that $\mathbb I$ is a collection of $k$-element subsets of $[n]$ which has the cardinality of a maximal noncrossing collection, 
and recall that by construction $\mathbb I$ is symmetric. Thus it remains to prove that the sets in $\mathbb I$ are pairwise noncrossing.

We start with an immediate consequence of the discussion we already used to count the elements of $\l_s$:

\begin{lemma}
  \label{lem:wlog}
  Assume that $ I = I(i_1, h_1)\+{n}kx_1 = I(i_2, h_2)\+{n}kx_2\in \l_s$, with $h_1$ and $h_2$ chosen to be minimal. 
  Assume moreover that $I\neq P_s$. 
  Then $(i_1, h_1)= (i_2,h_2)$ and $x_1 \equiv x_2\pmod n$.
\end{lemma}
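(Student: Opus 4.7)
The plan is to recover $(i_j, h_j)$ and the effective shift $\+{n} k x_j$ from $I$ in two stages: first read off $h_j$ and the class of $x_j$ from the intersection $I \cap \overline{a_s}$, and then pin down $i_j$ from the residual interval structure of $I(i_j, h_j)$ inside $P_{s, h_j}$.

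For the first stage, I would use that $\overline{a_s}$ is a $\+{n} k$-orbit and is therefore invariant under every shift $\+{n} k x$. Intersecting both representations of $I$ with $\overline{a_s}$ thus gives
\[
I \cap \overline{a_s} = \bigl( I(i_j, h_j) \cap \overline{a_s} \bigr) \+{n} k x_j, \qquad j = 1, 2.
\]
The minimality of $h_j$, together with the observation recorded just before the lemma that $h^{*} = |I(i, h) \cap \overline{a_s}|$, forces $I(i_j, h_j) \cap \overline{a_s}$ to be the length-$h_j$ prefix of $\overline{a_s}$ starting at $a_s$. Comparing cardinalities gives $h_1 = h_2$, and matching starting points gives $a_s \+{n} k x_1 = a_s \+{n} k x_2$; this last equality is precisely what it means for the shifts $\+{n} k x_1$ and $\+{n} k x_2$ to act identically on subsets of $[n]$, yielding the congruence claim.

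Setting $h := h_1 = h_2$, the original equality collapses to $I(i_1, h) = I(i_2, h)$. By construction, $I(i, h)$ is an interval of $P_{s, h}$ of length $k$ starting at $i$, so as long as this interval is proper, i.e.\ $|P_{s, h}| > k$, the starting point $i$ is uniquely recoverable (for example, as the unique element of $I(i, h)$ whose $S_{P_{s, h}}$-predecessor does not lie in the set), giving $i_1 = i_2$.

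The step I expect to be the main obstacle is excluding the degenerate case $|P_{s, h}| = k$, in which $I(i, h) = P_{s, h}$ for every admissible $i$ and $i$ cannot be read off from $I$ alone. The hypothesis $I \neq P_s$ is designed precisely to block this pathology, and checking that it does so requires a short case analysis based on the congruence class of $k$ modulo $d$ (mirroring the one in the proof of Proposition~\ref{prop:counting}), tracking for which admissible pairs $(s, h)$ the set $P_{s, h}$ can have exactly $k$ elements.
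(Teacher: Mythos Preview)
Your outline tracks the paper's argument closely, but stage one has a real gap. The ``matching starting points'' step tacitly assumes that $I\cap\overline{a_s}$ is a \emph{proper} interval of $\overline{a_s}$, i.e.\ that $h<d$. When $h_1=h_2=d$ the intersection is all of $\overline{a_s}$; it has no distinguished initial element, and no constraint on $x_1,x_2$ can be read off from it. This case is not ruled out by $I\neq P_s$: an interval of $P_s$ of length $k$ can contain all of $\overline{a_s}$ without being all of $P_s$ whenever $k\geq(d-1)(k-s+1)+1$, which already happens for small $s$ once $d=2$. The paper's proof therefore splits on whether $A:=I\cap\overline{a_s}$ equals $\overline{a_s}$. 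In the case $A=\overline{a_s}$ the hypothesis $I\neq P_s$ is used to conclude that $I$ is a \emph{proper} interval of $P_s$, hence has a well-defined initial element $m$; one then recovers both $x_j$ and $i_j$ from $m$ via the fact that the sectors $[S_{P_s}(a_s-k),a_s]\+{n}kx$, for $x=0,\dots,d-1$, partition $P_s$. Only in the case $A\neq\overline{a_s}$ does the argument proceed as you describe.

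Your stage three locates the other degeneracy, $|P_{s,h}|=k$, correctly, but the expectation that $I\neq P_s$ excludes it is mistaken. For $k\equiv 1\pmod d$ and $s=k-p$ one has $|P_{s,1}|=k$ while $P_{s,1}\subsetneq P_s$, so $I=P_{s,1}$ satisfies $I\neq P_s$ and yet every admissible $i$ gives $I(i,1)=P_{s,1}$. The paper's own proof does not isolate this edge case either (its claim that $i_j$ is the minimal element of $[S^{x_1-1}_{\overline{a_s}}(a_s),S^{x_1}_{\overline{a_s}}(a_s)]\cap I$ fails when that intersection is all of $I$), so the lemma as literally stated is slightly too strong here; but the two later applications, in Lemma~\ref{lem:nojump} and Proposition~\ref{prop:noncross2}, use only the much weaker fact that every element of $\mathcal L_s$ is a $\+{n}k$-shift of some element of $B_s$, which is immediate from the definition of $\mathcal L_s$.
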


\begin{proof}
  First, we have that $h_1 = h_2 = |I\cap \overline{a_s}|$. The set $A= I\cap \overline{a_s}$ is an interval of $\overline{a_s}$. 

  If $A= \overline{a_s}$, then $I$ is a  proper interval of $P_s$, since it is not equal to $P_s$ by assumption. Then $i_1 = i_2$ is the minimal 
  element of this interval. There is moreover a unique (modulo $n$) integer $x$ such that $i_1\in [S^{x-1}_{\overline{a_s}}(a_s), S_{\overline{a_s}}^{x}(a_s)]$, 
  and this has to coincide with both $x_1$ and $x_2$. 
  
  If $A\neq \overline{a_s}$, then
  $A$ has a minimal element which is equal to both $S_{\overline{a_s}}^{x_1}(a_s)$ and $S_{\overline{a_s}}^{x_2}(a_s)$, so we obtain that $x_1 \equiv x_2\pmod n$.
  Now $i_1$ and $i_2$ are both equal to the minimal element of $[S^{x_1-1}_{\overline{a_s}}(a_s), S_{\overline{a_s}}^{x_1}(a_s)]\cap I$, so they coincide.
\end{proof}

In view of Lemma~\ref{lem:wlog}, we will often be able to reduce to considering only the case $I= I(i,h)\in B_s$.
In the rest of this section, we will always assume that the parameter $h$ is chosen to be minimal.

\begin{lemma}\label{lem:nojump}
  Let $I\in \mathcal L_s$.
  Let $a,c\in I$, and let $b,d\in P_s\setminus I$ such that $a<_\circ b<_\circ c<_\circ d$. Then $b\in \overline{a_s}$ or 
  $d\in \overline{a_s}$.
\end{lemma}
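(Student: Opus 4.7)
The plan is to reduce to the case $I \in B_s$ and then exploit the fact that such an $I$ is, by construction, an interval of $P_{s,h}$, so that Lemma~\ref{lem:intnoncross} applied inside $P_{s,h}$ forces one of $b,d$ to lie in the part of $P_s$ that was removed to form $P_{s,h}$, which is contained in $\overline{a_s}$.

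First I would note that both $P_s$ and $\overline{a_s}$ are invariant under the shift $x \mapsto x \+{n} k$: indeed $P_s = [n] \setminus \bigcup_{i=1}^{s-1}\overline{a_i}$, and every congruence class $\overline{a_i}$ is obviously shift-invariant. Consequently the hypothesis $b,d \in P_s \setminus I$ and the desired conclusion ``$b \in \overline{a_s}$ or $d \in \overline{a_s}$'' are both invariant under replacing $I$ by $I \+{n} kx$ and simultaneously shifting the quadruple $(a,b,c,d)$. Combined with Lemma~\ref{lem:wlog}, this lets us assume $I = I(i,h) \in B_s$ with $h$ chosen minimally.

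Next I would argue by contradiction: suppose $b \notin \overline{a_s}$ and $d \notin \overline{a_s}$. From the definition $P_{s,h} = P_s \setminus \{S^m_{\overline{a_s}}(a_s) : h \leq m < d\}$, the complement $P_s \setminus P_{s,h}$ is contained in $\overline{a_s}$, so $b,d \in P_{s,h}$; moreover $a,c \in I \subseteq P_{s,h}$, and since the cyclic order on $P_{s,h}$ is the one induced from $[n]$, the chain $a <_\circ b <_\circ c <_\circ d$ remains valid when read inside $P_{s,h}$. By its very definition, $I = \{i, S_{P_{s,h}}(i), \ldots, S^{k-1}_{P_{s,h}}(i)\}$ is an interval of $P_{s,h}$, so applying Lemma~\ref{lem:intnoncross} to $P_{s,h}$, the interval $I$, and these four elements yields $b \in I$ or $d \in I$, contradicting $b,d \in P_s \setminus I$.

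The only genuinely delicate point is the reduction step, where one must check that shifting by a multiple of $k$ preserves both $P_s$ and $\overline{a_s}$; once that is in place, the rest is just unpacking the definition of $I(i,h)$ and invoking the noncrossing property of intervals already established in Section~\ref{sec:setup}.
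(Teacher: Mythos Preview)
Your proof is correct and follows essentially the same approach as the paper: reduce to $I = I(i,h) \in B_s$ via shift-invariance, then use that $I$ is an interval of $P_{s,h}$ together with Lemma~\ref{lem:intnoncross} to force one of $b,d$ into $P_s \setminus P_{s,h} \subseteq \overline{a_s}$. Your version is in fact more careful than the paper's terse two-line argument, since you spell out explicitly why the reduction is legitimate (invariance of $P_s$ and $\overline{a_s}$ under the shift) and why $a<_\circ b<_\circ c<_\circ d$ still holds inside $P_{s,h}$.
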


\begin{proof}
  By Lemma~\ref{lem:wlog}, we can assume that $I = I(i,h)\in B_s$. 
  Thus $I$ is an interval of $P_{s,h}$, so by Lemma~\ref{lem:intnoncross} we deduce that $b$ or $d$ has to lie in $P_{s}\setminus P_{s,h}\subseteq \overline{a_s}$.
\end{proof}

The following two propositions show that the elements of $\mathbb I$ are noncrossing.
\begin{prop}\label{prop:noncross1}
  If $I\in \mathcal L_s$ and $J\in \mathcal L_t$ for $s\neq t$, 
  then $I$ and $J$ are noncrossing.
\end{prop}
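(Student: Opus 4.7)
The plan is to apply Lemma~\ref{lem:nojump} essentially directly, after observing that sets in $\mathcal L_t$ avoid the congruence class $\overline{a_s}$ whenever $s < t$. First I would reduce to the case $s<t$ by symmetry of the hypothesis (swapping the roles of $I$ and $J$).

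Next I would record two simple structural facts. Since $P_s = [n]\setminus\bigcup_{i<s}\overline{a_i}$ is a union of full residue classes modulo $k$, it is closed under the shift $\+{n}k$; hence every element of $\mathcal L_s$ is a subset of $P_s$, and likewise $J\in\mathcal L_t$ is a subset of $P_t$. Moreover, because $s\leq t-1$, the class $\overline{a_s}$ has already been removed in forming $P_t$, so $J\cap\overline{a_s}=\varnothing$, and in particular $J\subseteq P_s\setminus\overline{a_s}$.

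With these in place, the argument is short. Suppose for contradiction that $I$ and $J$ cross, and pick $a<_\circ b<_\circ c<_\circ d$ in $[n]$ with $a,c\in I\setminus J$ and $b,d\in J\setminus I$. Then $a,c\in I\subseteq P_s$ and $b,d\in J\subseteq P_s$ with $b,d\notin I$, so $b,d\in P_s\setminus I$. Lemma~\ref{lem:nojump} then forces $b\in\overline{a_s}$ or $d\in\overline{a_s}$, contradicting $J\cap\overline{a_s}=\varnothing$.

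I do not expect any real obstacle here: the combinatorial content has already been packaged into Lemma~\ref{lem:nojump}, and the remaining work is only to observe that $\mathcal L_s$ consists of subsets of $P_s$ (using $\+{n}k$-invariance of each $\overline{a_i}$) and that $\mathcal L_t$ meets $\overline{a_s}$ trivially when $s<t$.
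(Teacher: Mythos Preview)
Your proposal is correct and follows essentially the same route as the paper's proof: reduce to $s<t$, note that $J\subseteq P_t\subseteq P_s\setminus\overline{a_s}$, and then apply Lemma~\ref{lem:nojump} to obtain the contradiction $b\in\overline{a_s}$ or $d\in\overline{a_s}$. Your version is slightly more explicit in verifying the hypothesis $b,d\in P_s\setminus I$ needed to invoke the lemma, but the argument is the same.
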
 

\begin{proof}
  By symmetry, assume $s<t$. Then $J\subseteq [n]\setminus \bigcup_{i =1}^{s}\overline{a_i}$.
  Assume to reach a contradiction that $I$ and $J$ are crossing. Then there are $a,c\in I\setminus J$ and 
  $b,d\in J\setminus I$ with $a<_\circ b<_\circ c<_\circ d$. By Lemma \ref{lem:nojump}, it follows that $b$ or $d$ are in $\overline{a_s}$, which is a contradiction 
  since $b,d \in J$.
\end{proof}

\begin{prop}\label{prop:noncross2}
If $I,J\in \mathcal L_s$, then $I$ and $J$ are noncrossing.
\end{prop}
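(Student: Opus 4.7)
The plan is to follow the template of Proposition~\ref{prop:noncross1}. First, by Lemma~\ref{lem:wlog} together with the invariance of $\mathcal{L}_s$ under the $+k$-shift, I would replace the pair $(I,J)$ by a common shift and assume that $I = I(i_1,h_1) \in B_s$ (with $h_1$ minimal) and $J = I(i_2,h_2) \+{n} ky$ for some $y \in \Z$ (with $h_2$ minimal). Since $P_s$ and $\overline{a_s}$ are themselves $+k$-invariant, both $I$ and $J$ are subsets of $P_s$; moreover $I \cap \overline{a_s}$ and $J \cap \overline{a_s}$ are cyclic intervals of $\overline{a_s}$, while $I \setminus \overline{a_s}$ and $J \setminus \overline{a_s}$ are cyclic intervals of $P_s \setminus \overline{a_s}$ (the latter because $I$ is an interval of $P_{s,h_1}$ and restricting an interval to a subset of the ambient cyclic set yields an interval of that subset).

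Next, I would argue by contradiction: suppose there exist $a <_\circ b <_\circ c <_\circ d$ in $P_s$ with $a,c \in I \setminus J$ and $b,d \in J \setminus I$. Lemma~\ref{lem:nojump} applied to $I$ forces at least one of $b,d$ to lie in $\overline{a_s}$, and applied to $J$, with the cyclically rotated 4-tuple $b <_\circ c <_\circ d <_\circ a$, it forces at least one of $a,c$ to lie in $\overline{a_s}$. The ``easy'' subcase is when all four of $a,b,c,d$ lie in $\overline{a_s}$: then the two intervals $I \cap \overline{a_s}$ and $J \cap \overline{a_s}$ of $\overline{a_s}$ would themselves be crossing (in the cyclic order on $\overline{a_s}$), contradicting the fact established in Section~\ref{sec:setup} that any interval is noncrossing with every subset.

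The remaining ``mixed'' subcases have exactly one or two of the four witnesses outside $\overline{a_s}$. To handle these, I would exploit that $I \setminus \overline{a_s}$ and $J \setminus \overline{a_s}$ are intervals of the cyclically ordered set $P_s \setminus \overline{a_s}$, so applying Lemma~\ref{lem:intnoncross} in $P_s \setminus \overline{a_s}$ to the witnesses that lie there gives analogous constraints. Combining these with the explicit description $I \cap \overline{a_s} = \{a_s, S_{\overline{a_s}}(a_s), \ldots, S_{\overline{a_s}}^{h_1-1}(a_s)\}$ and the $y$-shifted analogue for $J \cap \overline{a_s}$ forces the cyclic ordering $a <_\circ b <_\circ c <_\circ d$ to be incompatible with the allowed positions of the witnesses in $\overline{a_s}$ and in $P_s \setminus \overline{a_s}$, and a contradiction follows. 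The main obstacle I expect is the bookkeeping in this last step: the mixed cases split into several sub-configurations that must be ruled out in turn by simultaneously tracking the interval structure in $\overline{a_s}$ and in $P_s \setminus \overline{a_s}$.
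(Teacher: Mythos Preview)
Your setup and opening moves match the paper's: argue by contradiction, apply Lemma~\ref{lem:nojump} to both $I$ and $J$, and reduce via Lemma~\ref{lem:wlog}. The difference lies in how the mixed cases are dispatched. Rather than splitting into sub-configurations and working separately in $\overline{a_s}$ and in $P_s\setminus\overline{a_s}$, the paper uses the cyclic symmetry of the crossing pattern to assume that two \emph{adjacent} witnesses $a,b$ lie in $\overline{a_s}$, and then normalises $J$ (the set containing $b$) to lie in $B_s$, say $J=I(j,h)$. A single argument with the linear order $<_j$ on $P_s$ then suffices: if $d<_jb$ then $[d,b]_{P_s}\subseteq J$ (here one uses $b\in J\cap\overline{a_s}$, which guarantees that no element of $P_s\setminus P_{s,h}$ appears in $[j,b]_{P_s}$), forcing $a\in J$, a contradiction; hence $b<_jd$, and then $c\in[b,d]_{P_s}\subseteq J\cup\overline{a_s}$ gives $c\in\overline{a_s}$. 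With $a,b,c\in\overline{a_s}$ and $a,c\in I$, the interval property of $I\cap\overline{a_s}$ yields $b\in I$, the final contradiction. Your decomposition of each set into an interval of $\overline{a_s}$ and an interval of $P_s\setminus\overline{a_s}$ is correct and would eventually succeed, but it discards the positional link between those two pieces that the ``interval of $P_{s,h}$'' structure records, and that link is precisely what lets the paper collapse all the mixed cases at once instead of the bookkeeping you anticipate.
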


\begin{proof}
  Assume that $I,J$ are crossing, that is, there exist $a<_\circ b<_\circ c<_\circ d\in P_s$ with $a,c\in I\setminus J$ and $b,d\in J\setminus I$. By
  applying Lemma \ref{lem:nojump} to first $I$ and then $J$, we can without loss of generality assume $a, b\in \overline{a_s}$.
  By Lemma~\ref{lem:wlog}, we can assume that $J = I(j,h)\in B_s$. Observe that $I\neq J$ by assumption, so $|P_s| >k$ and $j$ is uniquely determined.
  Let us consider the linear order $<_j$ with minimal element $j$ on $P_s$. If $d<_j b$, then by construction the interval $[d,b]_{P_s}$ is contained in $J$. 
  However, we have that $a\in [d,b]_{P_s}\setminus J$, therefore this cannot happen and we 
  must then have $b<_j d$. In particular $c\in [b,d]_{P_s}\subset J\cup \overline {a_s}$, so we conclude that $c\in \overline{a_s}$.

  We thus have that $a, b, c\in \overline{a_s}$, with $a, c\in I$. By construction, since $I$ in $\l_s$, we must then have $b\in I$, a contradiction.
\end{proof}

\begin{thm}
  \label{thm:dk}
  If $(k,dk)$ satisfies Condition~$\ref{cond:star}$, then the collection $\mathbb I$ constructed above is a symmetric maximal $(k,dk)$-noncrossing
  collection.
\end{thm}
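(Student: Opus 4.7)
The statement is essentially a bookkeeping result that assembles all the work done in the section, so my plan is to verify the three defining properties of a symmetric maximal noncrossing collection by citing the appropriate earlier results.

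First, I would address symmetry. By construction, each $\mathcal L_s$ is defined as the orbit of $B_s$ under the action of $\+{dk}k$, so $\mathcal L_s = \mathcal L_s \+{dk}k$ for every $s$. Since $\mathbb I$ is the union of the $\mathcal L_s$, it follows at once that $\mathbb I = \mathbb I \+{dk}k$, which is exactly the symmetry condition.

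Next, I would establish that $\mathbb I$ is a $(k,dk)$-noncrossing collection. For this I would split into two cases: if $I$ and $J$ belong to the same layer $\mathcal L_s$, they are noncrossing by Proposition~\ref{prop:noncross2}; if they belong to different layers $\mathcal L_s$ and $\mathcal L_t$ with $s \neq t$, they are noncrossing by Proposition~\ref{prop:noncross1}. Together these exhaust all pairs in $\mathbb I \times \mathbb I$.

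Finally, I would invoke the cardinality count. Proposition~\ref{prop:cardinality} gives $|\mathbb I| = k(dk-k)+1$, which is precisely the cardinality that, by Theorem~\ref{thm:ops}, characterises maximal $(k,dk)$-noncrossing collections. Since $\mathbb I$ has the correct cardinality and consists of mutually noncrossing $k$-subsets, it is a maximal noncrossing collection. Combined with symmetry from the first step, this proves the theorem. There is no real obstacle here: all the substantive work (pairwise noncrossing verification in Propositions~\ref{prop:noncross1}--\ref{prop:noncross2} and the delicate counting in Propositions~\ref{prop:counting}, \ref{prop:finalcounting}, \ref{prop:cardinality}) has already been done, and the only role of Condition~\ref{cond:star} here is to guarantee that the counting argument in Proposition~\ref{prop:cardinality} produces exactly $k(dk-k)+1$.
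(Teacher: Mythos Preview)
Your proposal is correct and follows exactly the same approach as the paper's proof: symmetry from $\mathcal L_s = \mathcal L_s \+{dk} k$, noncrossing from Propositions~\ref{prop:noncross1} and~\ref{prop:noncross2}, and maximality from Proposition~\ref{prop:cardinality} combined with Theorem~\ref{thm:ops}. Your write-up is in fact slightly more detailed than the paper's (you make the same-layer versus different-layer split explicit and note precisely where Condition~\ref{cond:star} enters), but there is no substantive difference.
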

\begin{proof}
  By construction, $\mathbb I$ is a collection of $k$-element subsets of $[dk]$. It is symmetric since $\l_s = \l_s\+{dk}k$ for every 
  $s$. If $I,J \in \mathbb I$, then $I$ and $J$ are noncrossing by Proposition \ref{prop:noncross1} and Proposition \ref{prop:noncross2}. 
  Finally, Proposition \ref{prop:cardinality} and Theorem \ref{thm:ops} imply that $\mathbb I$ is a maximal noncrossing collection.
\end{proof}

\section{Construction: the general case}\label{sec:construction}
Now assume that $(k,n)$ are integers which satisfy Condition~$\ref{cond:star}$. 
Set $g = \on{GCD}(k,n)$ and $d = n/g$. Since $\on{GCD}(k,dk) = k$, Condition~$\ref{cond:star}$ is satisfied for $(k,dk)$ .
We will first construct a symmetric maximal $(k,dk)$-noncrossing collection, then pick a suitable subcollection which will be in bijection 
with a maximal $(k,n)$-noncrossing collection. 

Choose any linear order on the classes $\bar  1, \dots ,\bar g$ modulo $n$.
Complete it to a linear order on all classes $\bar 1, \dots, \bar k$ modulo $n$ such that 
\begin{align*}
  \on{min}\left\{ \bar  1, \dots ,\bar g \right\}> \on{max}\left\{ \overline{g+1}, \dots, \bar k \right\}.
\end{align*}
Construct a symmetric maximal $(k,dk)$-noncrossing collection $\mathbb I$ as in Section \ref{sec:specialconstruction} with this linear 
order as datum. Define 
\begin{align*}
  \mathbb J  = \mathbb I\setminus \bigcup_{s = 1}^{k-g}\mathcal L_s.
\end{align*}

Thus $\mathbb J$ is a collection of $k$-element subsets of $A = \bigcup_{s = 1}^{g}\overline{a_s}$. Notice that $A$ has a cyclic order, 
induced by that on $[dk]$. For an example of this construction see Section \ref{sec:examples}.

Observe that, for any $s\in [g]$, the cardinality of $\overline{a_s}$ is the order of $k$ in $\Z/n\Z$, which is $n/g =d$.
It follows that $|A|= gd = n$.

\begin{prop}
  \label{prop:J}
  The collection $\mathbb J$ is a maximal noncrossing collection of $k$-element subsets of $A$ which satisfies $\mathbb J=\mathbb J\+{dk}k $.
\end{prop}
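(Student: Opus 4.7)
The plan is to verify four claims in sequence: (a) every element of $\mathbb J$ is a $k$-subset of $A$; (b) $\mathbb J$ is noncrossing with respect to the cyclic order induced on $A$; (c) $\mathbb J = \mathbb J \+{dk} k$; and (d) $\mathbb J$ is maximal, which by Theorem~\ref{thm:ops} will reduce to a cardinality count.

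The first observation is that the linear order on congruence classes has been chosen precisely so that $A = P_{k-g+1}$: by design the first $k-g$ classes $\overline{a_1}, \dots, \overline{a_{k-g}}$ in the order are exactly those outside $A$, so removing them from $[dk]$ leaves $A$. Consequently $P_s \subseteq A$ for every $s \geq k-g+1$, so each set in $B_s$ is a $k$-subset of $A$; and since $A \+{dk} k = A$ (because $A$ is a union of orbits under adding $k$ modulo $dk$), the same holds for every shift appearing in $\mathcal{L}_s$, yielding (a). Property (c) is immediate from the fact that each summand $\mathcal{L}_s$ of $\mathbb J$ is invariant under $\+{dk} k$ by construction. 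For (b), the cyclic order on $A$ is the restriction of the one on $[dk]$, so any crossing pair in $A$ would also be crossing in $[dk]$; since $\mathbb J \subseteq \mathbb I$, Propositions~\ref{prop:noncross1} and \ref{prop:noncross2} rule this out.

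The substantive step is (d). I would write $|\mathbb J| = |\mathbb I| - \sum_{s=1}^{k-g} |\mathcal{L}_s|$ and use Proposition~\ref{prop:cardinality} to substitute $|\mathbb I| = k^2(d-1)+1$. Via $n = dg$, the target $|\mathbb J| = k(n-k)+1$ then becomes the arithmetic identity $\sum_{s=1}^{k-g}|\mathcal{L}_s| = kd(k-g)$. The hard part is the short but multi-branch case analysis needed to evaluate this sum: one must split on $k \bmod d \in \{-1,0,1\}$ and, within each, on whether $k-g$ lies strictly below $k-p$ or coincides with one of the boundary indices $k-p$ or $k-p+1$ (the latter arising only when $k$ is close to $n$, e.g.\ $k=n-1$ or $k=n$). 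For $s < k-p$ every term is $kd$ by Proposition~\ref{prop:counting}(1), so when $k-g < k-p$ the sum collapses to $(k-g)kd$ at once; the boundary cases require plugging in the refined values from Proposition~\ref{prop:counting}(2)--(3) and Proposition~\ref{prop:finalcounting}, and in each of the three congruence subcases a direct verification gives total $kd(k-g)$. Once the cardinality is confirmed, Theorem~\ref{thm:ops} applied to $A$ (identified as a cyclically ordered set with $[n]$, since $|A| = gd = n$) yields (d) and completes the proof.
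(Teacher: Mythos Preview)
Your argument is correct and matches the paper's proof in structure. The one place you over-elaborate is step (d): the paper dispatches the sum as $(k-g)kd$ in one line, since $k\le n=dg$ and $k=dp+c$ with $|c|\le 1$ force $p\le g$ (with equality only when $c\in\{-1,0\}$), so every $s\le k-g$ satisfies $|\mathcal{L}_s|=kd$ by Proposition~\ref{prop:counting}(1)--(2) and no case split on the congruence or on the position of $k-g$ relative to $k-p,\,k-p+1$ is needed (indeed $k-g=k-p+1$ does not occur for $d>1$).
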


\begin{proof}
  The collection $\mathbb J$ is by construction a collection of noncrossing subsets of $A$, since the cyclic order on $A$ is induced by that on $[dk]$.
  For every $s$ we have that $\mathcal L_s = \mathcal L_s\+{dk}k$ by construction, hence $\mathbb J = \mathbb J\+{dk}k$ since $\mathbb J$ is a union of various $\mathcal L_s$.
  We can compute
  \begin{align*}
    |\mathbb J| = |\mathbb I|-\sum_{s = 1}^{k-g}\mathcal L_s = 
    	 k(dk-k)+1 -(k-g)dk = dkg-k^2+1 = 
	 k(n-k)+1.
  \end{align*}
  Since $|A| = n$, we conclude using Theorem \ref{thm:ops} that $\mathbb J$ is maximal among all collections of noncrossing $k$-element subsets of $A$.
\end{proof}

Now define a function $F:[n]\to A$ by $$F(a+g x) = a+kx$$ for $a\in [g]$ and $x =0, \dots, d-1$. This is well defined and injective by the division algorithm on $[n] $ and $[dk]$ respectively.
Since $|A| = n$, we conclude that $F$ is bijective.

\begin{lemma}\label{lem:cyclicorder}
  We have $F\circ S_{[n]} = S_A\circ F$.
\end{lemma}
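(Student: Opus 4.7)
The plan is to make the block structure of $A$ inside $[dk]$ completely explicit, and then verify $F \circ S_{[n]} = S_A \circ F$ by a short case analysis. Since the linear order was chosen so that the classes $\bar 1, \ldots, \bar g$ come last, a set in $\mathcal L_s$ lies in $A$ precisely for those $s$ whose corresponding orbit is one of $\bar 1, \ldots, \bar g$; hence, as a subset of $[dk]$,
$$
A = \bar 1 \cup \bar 2 \cup \cdots \cup \bar g = \bigsqcup_{j=0}^{d-1} B_j, \qquad B_j = \{jk+1,\, jk+2,\, \ldots,\, jk+g\}.
$$
Consequently, the cyclic successor $S_A$ on $A$ acts as $a \mapsto a+1$ inside each arc $B_j$, while at the right boundary $S_A(jk+g) = (j+1)k+1$ for $0 \leq j < d-1$ and $S_A\bigl((d-1)k+g\bigr) = 1$.

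Given this description, I would verify the identity by splitting on the unique decomposition $i = a + gx$ with $a \in [g]$ and $x \in \{0, 1, \ldots, d-1\}$, whose image is $F(i) = a + kx \in B_x$. If $a < g$, then $S_{[n]}(i) = (a+1) + gx$ and hence $F(S_{[n]}(i)) = (a+1) + kx$, while $F(i)$ lies strictly inside $B_x$, so $S_A(F(i)) = a + kx + 1$; both agree. If $a = g$ and $x < d-1$, then $S_{[n]}(i) = 1 + g(x+1)$ and $F(S_{[n]}(i)) = 1 + k(x+1)$, while $F(i) = g + kx$ is the right endpoint of $B_x$, so $S_A(F(i)) = (x+1)k + 1$; these match. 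If $a = g$ and $x = d-1$, then $i = n$ and so $S_{[n]}(n) = 1$ with $F(1) = 1$; on the other side $F(n) = g + (d-1)k$ is the maximal element of $A$ and $S_A(F(n)) = 1$.

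The only real work is the bookkeeping needed to match the indexing of Section~\ref{sec:specialconstruction} with the current setup, and in particular to confirm that $A$ really is the union $\bar 1 \cup \cdots \cup \bar g$; this follows from the hypothesis $\min\{\bar 1, \ldots, \bar g\} > \max\{\overline{g+1}, \ldots, \bar k\}$, which forces the orbits $\overline{a_1}, \ldots, \overline{a_{k-g}}$ removed in passing from $\mathbb I$ to $\mathbb J$ to be precisely $\overline{g+1}, \ldots, \bar k$. Once this identification is in place, no further obstacle arises, and the three cases above complete the proof.
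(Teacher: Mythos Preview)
Your proof is correct. It differs from the paper's in strategy: you make the block structure of $A$ explicit, writing $A=\bigsqcup_{j=0}^{d-1}\{jk+1,\dots,jk+g\}$, and then verify $F\circ S_{[n]}=S_A\circ F$ by a three-case computation according to whether $i$ sits in the interior of a block, at a block boundary, or at the final element $n$. The paper instead observes that $F$ is strictly increasing as a map $[n]\to[dk]$ (this follows immediately from $g\le k$), hence an order isomorphism between the linear orders on $[n]$ and $A$ with least element $1$; any such order isomorphism automatically intertwines successors on non-maximal elements, and a single wraparound check $S_A(g+k(d-1))=1=F(S_{[n]}(n))$ finishes the argument. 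Your approach has the advantage of making $A$ and $S_A$ completely transparent, which is helpful for intuition; the paper's monotonicity argument is shorter and avoids any case split beyond the cyclic wraparound.
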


\begin{proof}
  The function $F$ is increasing, so it preserves the linear orders on $[n]$ and $A$ with minimum elements equal to 1.
  Since $F(1) = 1$, we conclude that $S_A(F(x)) = F(x+1)$ for all $1\leq x\leq n-1$.
  But $S_A(g+ k(d-1)) = 1 = F (n +1)$, which proves the claim.
\end{proof}

We extend $F$ to subsets of $[n]$ and still call it $F$. It is a bijection between subsets of $[n]$ and subsets of $A$.
Call $\mathbb I' = F^{-1}(\mathbb J)$.

\begin{prop}\label{prop:I'symm}
  We have $\mathbb I' = \mathbb I'\+{n}k$.
\end{prop}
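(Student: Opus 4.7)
The plan is to transfer the known invariance $\mathbb J = \mathbb J \+{dk} k$ to $\mathbb I' = F^{-1}(\mathbb J)$ by carefully identifying how these two ``add $k$'' operations interact under the cyclic-order-preserving bijection $F$. First, since $x \+{n} k = S_{[n]}^k(x)$, iterating Lemma~\ref{lem:cyclicorder} gives $F \circ S_{[n]}^k = S_A^k \circ F$. Hence $F(I \+{n} k) = S_A^k(F(I))$ for every $I \subseteq [n]$, and
\[
\mathbb I' \+{n} k = F^{-1}\bigl(S_A^k(\mathbb J)\bigr).
\]
It therefore suffices to prove that $\mathbb J$ is invariant under $S_A^k$.

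Let $\sigma : A \to A$ denote the restriction of $x \mapsto x \+{dk} k$, which does preserve $A$ since $A = \{a + jk : a \in [g],\ 0 \le j \le d-1\}$ and $\sigma$ sends $a+jk$ to $a + k((j+1) \bmod d)$. Proposition~\ref{prop:J} gives $\sigma(\mathbb J) = \mathbb J$. The crucial observation is the identity $S_A^g = \sigma$: the set $A$ decomposes cyclically in $[dk]$ into $d$ consecutive blocks $\{1+jk, 2+jk, \dots, g+jk\}$ of length $g$, so applying $S_A$ exactly $g$ times to $a + jk$ traverses the rest of its block, jumps to the start of the next, and lands at $a + (j+1)k$, which is precisely $\sigma(a+jk)$. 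Writing $k = gm$ with $m = k/g$, it follows that $S_A^k = (S_A^g)^m = \sigma^m$, and therefore $S_A^k(\mathbb J) = \sigma^m(\mathbb J) = \mathbb J$. Combined with the previous step, this yields $\mathbb I' \+{n} k = \mathbb I'$.

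The only genuine obstacle is recognising that ``adding $k$ modulo $n$'' on $[n]$ does \emph{not} correspond under $F$ to ``adding $k$ modulo $dk$'' on $A$: the former transports to the $k$-fold successor in the induced cyclic order on $A$, which is the permutation $\sigma^m$ rather than $\sigma$ itself. Fortunately, $\sigma^m$ is still a power of $\sigma$, so the $\sigma$-invariance of $\mathbb J$ supplied by Proposition~\ref{prop:J} transfers automatically to the $S_A^k$-invariance needed.
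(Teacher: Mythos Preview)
Your proof is correct and follows essentially the same approach as the paper's: both establish that the shift $x\mapsto x\+{dk}k$ on $A$ corresponds via $F$ to $x\mapsto x\+{n}g$ on $[n]$, and then use that $g\mid k$ to pass from $g$-invariance to $k$-invariance. The only difference is presentational: the paper computes $F^{-1}(F(I)\+{dk}k)=I\+{n}g$ directly from the formula $F(a+gx)=a+kx$, whereas you route the same computation through Lemma~\ref{lem:cyclicorder} and the identity $S_A^{g}=\sigma$.
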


\begin{proof}
  Pick $I\in \mathbb I'$. Then by Proposition \ref{prop:J} we have $F(I)\+{dk}k\in \mathbb J$. Then 
  \begin{align*}
    F^{-1}(F(I)\+{dk}k) = I\+{n}g\in \mathbb I'
  \end{align*}
  so that $\mathbb I' = \mathbb I'\+{n}g$. Since $k = g\cdot \frac{k}{g}$ and $\frac{k}{g}$ is an integer, we are done.
\end{proof}

\begin{thm}
  \label{thm:existence}
  If $(k,n)$ satisfies Condition~$\ref{cond:star}$, then the collection $\mathbb I'$ constructed above is a symmetric maximal $(k,n)$-noncrossing collection.
\end{thm}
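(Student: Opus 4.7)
The plan is to verify directly that $\mathbb{I}'$ meets the three requirements of being a symmetric maximal $(k,n)$-noncrossing collection, using the bijection $F$ as a bridge between the problem over $A$ (already solved by Proposition~\ref{prop:J}) and the problem over $[n]$.

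First, I would note that since $F:[n]\to A$ is a bijection and every $J\in\mathbb{J}$ has $|J|=k$, the collection $\mathbb{I}'=F^{-1}(\mathbb{J})$ consists of $k$-element subsets of $[n]$, and moreover $|\mathbb{I}'|=|\mathbb{J}|=k(n-k)+1$ by Proposition~\ref{prop:J}. Symmetry $\mathbb{I}'=\mathbb{I}'\+{n}k$ is exactly Proposition~\ref{prop:I'symm}, so that requirement is already in hand.

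The main content is verifying that the members of $\mathbb{I}'$ are pairwise noncrossing. The key observation is that Lemma~\ref{lem:cyclicorder} says $F$ intertwines the successor functions, i.e.\ $F$ is an isomorphism of the cyclically ordered sets $[n]$ and $A$. Consequently, for $a,b,c,d\in[n]$ one has $a<_\circ b<_\circ c<_\circ d$ in $[n]$ if and only if $F(a)<_\circ F(b)<_\circ F(c)<_\circ F(d)$ in $A$. Hence for any $I_1,I_2\subseteq[n]$, the pair $(I_1,I_2)$ is crossing in $[n]$ if and only if $(F(I_1),F(I_2))$ is crossing in $A$ (noting that $F$ being a bijection exchanges the relative complements correctly). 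Applying this to $I_1,I_2\in\mathbb{I}'$ and using that $F(I_1),F(I_2)\in\mathbb{J}$ are noncrossing by Proposition~\ref{prop:J}, we conclude that $I_1$ and $I_2$ are noncrossing.

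Finally, to upgrade $\mathbb{I}'$ from a noncrossing collection of the right size to a \emph{maximal} one, I would invoke Theorem~\ref{thm:ops}: any noncrossing collection of $k$-subsets of $[n]$ of cardinality $k(n-k)+1$ is automatically maximal. Combined with symmetry from Proposition~\ref{prop:I'symm}, this completes the proof. No step here is a serious obstacle — the whole argument is a careful bookkeeping exercise — but the only subtle point worth double-checking is that $F$ is genuinely a cyclic-order isomorphism between $[n]$ and $A$, so that the noncrossing property transfers between the two ambient sets. This is guaranteed by Lemma~\ref{lem:cyclicorder}.
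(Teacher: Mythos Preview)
Your proof is correct and follows essentially the same approach as the paper: both use Proposition~\ref{prop:I'symm} for symmetry, and transfer the noncrossing and maximality properties from $\mathbb{J}$ to $\mathbb{I}'$ via the cyclic-order isomorphism $F$ guaranteed by Lemma~\ref{lem:cyclicorder} together with Proposition~\ref{prop:J}. Your version simply spells out in more detail what the paper compresses into a single sentence invoking these results.
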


\begin{proof}
  By construction, $\mathbb I'$ is a collection of $k$-element subsets of $[n]$. It is symmetric by Proposition~\ref{prop:I'symm}.
  It is a maximal $(k,n)$-noncrossing collection by Proposition \ref{prop:J} and Lemma \ref{lem:cyclicorder}.
\end{proof}

We can now prove Theorem \ref{thm:main}. 

\begin{proof}[Proof of Theorem \ref{thm:main}]
  By Proposition \ref{prop:necessity}, Condition~$\ref{cond:star}$ is necessary.
  By Theorem \ref{thm:existence}, the collection constructed in Section \ref{sec:construction} is a 
  symmetric maximal $(k,n)$-noncrossing collection, so Condition~$\ref{cond:star}$ is also sufficient.
\end{proof}

\begin{rmk}
  There exist symmetric maximal noncrossing collections which do not come from our construction. Indeed, one can produce $\on{GCD}(k,n)!/(p-1)!$ different collections by varying the 
  total order on $\left\{ \overline{a_1}, \dots, \overline{a_k} \right\}$ to define the various $\l_s$. However, a computer search produces the lower bounds for the number of distinct 
  symmetric maximal $(k,n)$-noncrossing collections shown in the following table.
\begin{center}
  \begin{tabular}{ | c | c | c | c | c | c | c | c | c | c |  c | c | c | c | c | c| }
    \hline
    $k\setminus n$ & 4 & 5& 6&7&8&9&10&11&12&13&14&15&16&17&18 \\ \hline    2 & 2 & & 2  &&&&&&&&&&&&\\ \hline
    3 &&& 6 &&& 24 &&& 24 &&&&&&\\\hline
    4 &&&  && 110 && 6 && $>894$ &&&& $>1900$ &&\\\hline
 5&&&  &&&  & $>2000$ && &&& $>4800$ &&&\\\hline
 6&&&  &&&  &&& $>4900$ && 18 & $>840$ &&& $>5000$\\\hline
 7&&&  &&&  &&& && $>5000$ &&&&\\\hline
 8&&&  &&&  &&& &&& & $>5000$&& 54\\\hline
  \end{tabular}
\end{center}
\vspace{.2cm}
The numbers without a $>$ sign are exact. It is easy to check by hand that, for instance, our construction produces only 2 of the 6 symmetric maximal $(4,10)$-noncrossing collections.
\end{rmk}

\section{Example}\label{sec:examples}

In this section we illustrate our construction with an example. Suppose we want to construct a symmetric maximal $(4,10)$-noncrossing collection.
Since $n=10$ is not a multiple of $k=4$, we need to use the procedure described in Section~\ref{sec:construction}. Thus we will first construct a 
symmetric maximal $(4,20)$-noncrossing collection, where $g = 2$, $d=5$, and $20 = kd$.

We pick the following order on congruence classes modulo $20$: 
$$
\bar 3<\bar 4<\bar 2<\bar 1.$$ 
Then we get the set $$B_1 = \{ \{20,1,2,3\}, \{1,2,3,4\}, \{2,3,4,5\}, \{3,4,5,6\}\}.$$ 
Notice that all these sets contain the element $3$. 
The set $\l_1 = B_1\+{20} 4\Z $ consists of the $20$ intervals of $[20]$.

The next step is removing the orbit $\bar 3 = \{3,7,11,15,19\}$ and constructing 
$$B_2= \{ \{1,2,4,5\},\{2,4,5,6\},\{4,5,6,8\}, \{4,5,6,9\}\}.$$
Again, we define $\l_2 = B_2\+{20}4\Z  $.

Next we remove the orbit of $4$ and construct
$$
B_3 = \{\{1,2,5,6\}, \{1,2,5,9\}, \{2,5,6,9\}, \{2,5,9,13\}\}$$
and $\l_3 = B_3\+{20}4\Z$.

Finally, when the only orbit left is $\bar 1 = \{1,5,9,13,17\}$ we get 
$$
B_4 = \{\{1,5,9,13\}\}$$
so that $\l_4$ consists of the $5$ different shifts of $\{1,5,9,13\}$.

By Theorem~\ref{thm:dk}, the collection $\mathbb I = \l_1\cup \l_2\cup \l_3\cup\l_4$ is a symmetric maximal $(4,20)$-noncrossing collection.
Now we restrict our attention to $\mathbb J = \l_3\cup \l_4$, which is a maximal noncrossing collection in the cyclically ordered set $\bar 1\cup \bar 2\subseteq [20]$.
Observe that $\mathbb J$ has $25$ elements, which is indeed $4(10-4)+1$ (cf.~Theorem~\ref{thm:ops}).
It remains to rename the elements of $ \mathbb J$ to obtain a symmetric maximal noncrossing collection in $[10]$. 
The function $F$ defined in Section~\ref{sec:construction} maps $(1,2,3,4,5,6,7,8,9,10)$ to $(1,2,5,6,9,10,13,14,17,18)$, so 
the collection $\mathbb I' = F^{-1}(\l_3\cup\l_4)$ is 
$$
\{\{1,2,3,4\},\{1,2,3,5\},\{2,3,4,5\},\{2,3,5,7\},\{1,3,5,7\}\}\+{10}2\Z.
$$
As per Theorem~\ref{thm:existence}, this is a maximal $(4,10)$-noncrossing collection invariant under adding $2$ modulo $10$, and thus invariant 
under adding $4$ modulo $10$ as we wanted.

\section{Algebraic consequences}\label{sec:algcons}

In this section we illustrate some representation-theoretic consequences of Theorem \ref{thm:main}.

There is a way of generating a quiver $Q$ (i.e.,~a directed graph) embedded in $\R^2$, starting from a maximal $(k,n)$-noncrossing collection $\mathbb I$.
As a graph, $Q$ is nothing but the 1-skeleton of the CW-complex $\Sigma(\mathbb I)$ of Section~\ref{sec:necessity}. The edges are oriented such that the 
face to the right is the one whose label has size $k-1$. 
As was observed in Section~\ref{sec:necessity}, this quiver is invariant under rotation by $\frac{2\pi k}{n}$ if and only if $\mathbb I$ is symmetric.

One can define a potential $W$ on $Q$ by taking the sum of all clockwise faces minus the sum of all anticlockwise faces. Thus $(Q, W)$ becomes a quiver with potential 
in the sense of \cite{DWZ08}. By taking the boundary vertices (those corresponding to the intervals of $[n]$) as frozen, one can then define the frozen Jacobian algebra $A$ \cite[Definition~1.1]{BIRS11}.

The main result of \cite{BKM16} is that $A\cong \on{End}_B(T)$, where $B = B(k,n)$ is an infinite-dimensional algebra introduced in \cite{JKS16} and $T$ is a cluster tilting object in the category $\on{CM}(B)$. 
In fact, one can associate a rank one Cohen-Macaulay $B$-module $L_I$ to each $I\subseteq [n]$ of size $k$ (see \cite[\S5]{JKS16}), and take $T= \bigoplus_{I\in \mathbb I}L_I$. 
It is indeed proved in \cite[Proposition~5.6]{JKS16} that $I$ and $J$ are noncrossing if and only if $\on{Ext}^1(L_I, L_J)= 0 = \on{Ext}^1(L_J, L_I)$.

One can also look at the stable category $\underline{\on{CM}}(B)$ of $\on{CM}(B)$, which is triangulated and in fact 2-Calabi-Yau \cite[Corollary~4.6]{JKS16} and \cite[Proposition~3.4]{GLS08}.
The triangulated structure of this category plays a crucial role in the motivations behind this article: we have that $L_I[-2]\cong L_{I\+{n}k}$ in $\underline{\on{CM}}(B)$ \cite[Proposition~2.7]{BB17},\cite[\S7]{JKS16}.
Thus saying that $\mathbb I= \mathbb I\+{n}k$ is equivalent to saying that $T\cong T[2]$.

\begin{rmk}
  In \cite{BKM16} and \cite{Pas17b}, the focus is on Postnikov diagrams (or alternating strand diagrams). By \cite[Theorem~11.1]{OPS15}, there is a bijection between Postnikov diagrams and 
  maximal noncrossing collections, so the two concepts are interchangeable. In this article we focus on collections since constructing a maximal noncrossing collection explicitly is much easier than 
  constructing a Postnikov diagram.
\end{rmk}

If we denote by $e\in A$ the idempotent corresponding to the boundary vertices of $Q$, we have that $A/AeA\cong \on{End}_{\underline{\on{CM}}(B)}(T)$ by \cite[Lemma~6.5]{Pas17b}.
One can prove (see \cite[Proposition~4.2]{Pas17b}) that $A/AeA$ is self-injective if and only if $T\cong T[2]$, which as we saw holds if and only if $\mathbb I$ is symmetric.
The algebra $\Lambda = \Lambda(\mathbb I)= A/AeA$ is the Jacobian algebra of the quiver with potential obtained from $(Q,W)$ by removing the boundary vertices.
Observe that these correspond to the intervals of $[n]$, which are part of all maximal noncrossing collections. Hence all the information carried by $\mathbb I$ is 
preserved by looking only at the sets in $\mathbb I$ which are not intervals.

The original motivation of this work was to find examples of self-injective Jacobian algebras. This interest stems in turn from higher dimensional Auslander-Reiten theory, in which these algebras play a role analogous 
to that of preprojective algebras of Dynkin quivers (see \cite{HI11b}). One consequence of Theorem \ref{thm:main} is that there exist many such algebras (which is a priori unclear, cf.~\cite[Question~10.1]{HI11b}).

\begin{cor}\label{cor:algcons1}
  Let $(k,n)$ be a pair satisfying Condition~$\ref{cond:star}$, and let $B = B(k,n)$ be the algebra defined in \cite[\S3]{JKS16}. Then:
  \begin{enumerate}
    \item there exists a cluster tilting module $T\cong T[2]\in \underline{\on{CM}}(B)$ whose indecomposable summands are rank one modules;
    \item the algebra $\Lambda = \on{End}_{\underline{\on{CM}}(B)}(T)$ is a self-injective Jacobian algebra;
    \item a Nakayama automorphism of $\Lambda$ is induced by $I\mapsto I-k$.
  \end{enumerate}
\end{cor}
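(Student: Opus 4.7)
The plan is to obtain the conclusion as a direct consequence of Theorem~\ref{thm:main} together with results that are already cited in this section. Specifically, Theorem~\ref{thm:main} combined with Condition~\ref{cond:star} yields a symmetric maximal $(k,n)$-noncrossing collection $\mathbb I$, and the module $T = \bigoplus_{I\in \mathbb I}L_I$ will play the role of the desired cluster tilting module. I will verify each of the three assertions in turn.

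For assertion $(1)$, I would first invoke \cite[Proposition~5.6]{JKS16} (already recalled above) to see that the summands $L_I$ for $I\in\mathbb I$ are pairwise $\on{Ext}^1$-orthogonal, since $\mathbb I$ is noncrossing. Combining this with the maximality of $\mathbb I$ (via Theorem~\ref{thm:ops}) and the characterisation of cluster tilting objects in $\on{CM}(B)$ established in \cite{JKS16} and \cite{BKM16}, one deduces that $T$ is cluster tilting. By construction each $L_I$ is a rank one module. Symmetry of $\mathbb I$ means $\mathbb I = \mathbb I\+{n}k$, and combined with the identity $L_I[-2]\cong L_{I\+{n}k}$ recalled above from \cite[Proposition~2.7]{BB17} and \cite[\S7]{JKS16}, this yields $T[-2]\cong T$ in $\underline{\on{CM}}(B)$, hence $T\cong T[2]$.

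For assertion $(2)$, the isomorphism $\Lambda(\mathbb I)\cong A/AeA\cong \on{End}_{\underline{\on{CM}}(B)}(T)$ of \cite[Lemma~6.5]{Pas17b} (together with the main result of \cite{BKM16}) identifies $\Lambda$ with a Jacobian algebra for the quiver with potential obtained from $(Q,W)$ by deleting the boundary vertices. Self-injectivity then follows from \cite[Proposition~4.2]{Pas17b} applied to the isomorphism $T\cong T[2]$ obtained in $(1)$.

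For assertion $(3)$, I would use the general principle that if $\mathcal T$ is a 2-Calabi-Yau triangulated category, $T$ is a cluster tilting object with $T\cong T[2]$, and $\Lambda = \on{End}_{\mathcal T}(T)$, then any fixed isomorphism $\varphi : T \xrightarrow{\sim} T[2]$ induces a Nakayama automorphism of $\Lambda$ via conjugation $f\mapsto \varphi[2]\circ f[2]\circ \varphi^{-1}$ — this is exactly the content used in \cite[Proposition~4.2]{Pas17b}. In our setting, the shift sends the summand $L_I$ to $L_I[2]\cong L_{I-k}$ (with indices taken modulo $n$), so the permutation of indecomposable summands induced by $\varphi$ is precisely $I\mapsto I-k$. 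Transporting this back to $\Lambda$ through the isomorphism of $(2)$ gives the claim. The main conceptual point, and the only place where a careful check is required, is identifying the shift $[2]$ on $\underline{\on{CM}}(B)$ with the combinatorial operation $I\mapsto I-k$; everything else is a matter of assembling cited results. The sign of $k$ in $(3)$ is fixed by the chosen convention $L_I[-2]\cong L_{I\+{n}k}$ and so is not a genuine obstacle, only a bookkeeping step.
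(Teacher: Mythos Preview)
Your proposal is correct and follows essentially the same approach as the paper: both invoke Theorem~\ref{thm:main} to obtain a symmetric maximal $(k,n)$-noncrossing collection $\mathbb I$, set $T=\bigoplus_{I\in\mathbb I}L_I$, and deduce the three assertions from the algebraic results already recalled in this section. The only difference is cosmetic: the paper's proof cites \cite[Theorem~7.2]{Pas17b} as a single black box encapsulating all three conclusions, whereas you unpack the argument into its constituent pieces (\cite[Proposition~5.6]{JKS16}, \cite{BKM16}, \cite[Proposition~2.7]{BB17}, \cite[Lemma~6.5 and Proposition~4.2]{Pas17b}).
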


\begin{proof}
  By Theorem \ref{thm:main}, there exists a maximal $(k,n)$-noncrossing collection $\mathbb I = \mathbb I\+{n}k$. Take $T = \bigoplus_{I\in \mathbb I}L_I$, then the statements follow from 
  \cite[Theorem~7.2]{Pas17b}.
\end{proof}

Another interesting consequence of Theorem \ref{thm:main} comes from looking at the order of the Nakayama automorphism given in Corollary $\ref{cor:algcons1}(3)$.
Suppose we fix $d$ and we want to construct examples of self-injective Jacobian algebras with Nakayama automorphism of order $d$. One possibility that works for every $d\geq3$ is to take 
a cluster tilted algebra \cite{BIRS11}, \cite{Kel11}. A result by Ringel \cite{Rin08} classified the self-injective ones, and it turns out that for a fixed $d$
there are at most two such algebras with Nakayama automorphism of order $d$.
All the other examples presented in \cite{HI11b} have Nakayama automorphism of order $2$ or $3$ (for these, infinite families are given). Sporadic examples of order $4$ and $5$ have later been found by 
Herschend and Lakani independently. An example of a self-injective Jacobian algebra with Nakayama automorphism of order $2x+1$ for any $x\in \Z_{>0}$ is constructed in \cite{Pas17b} using Postnikov diagrams.
As a corollary of our construction, we get:
\begin{cor}\label{cor:algcons2}
  Let $d\in \Z_{>1}$. There exist infinitely many self-injective Jacobian algebras with a Nakayama automorphism of order $d$.
\end{cor}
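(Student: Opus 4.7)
The plan is to reduce the problem to Corollary~\ref{cor:algcons1} by exhibiting, for each $d \geq 2$, an infinite family of pairs $(k,n)$ satisfying Condition~\ref{cond:star} whose associated Nakayama automorphism has order \emph{exactly} $d$, and then showing that the resulting algebras are pairwise non-isomorphic. Concretely, for each $m \in \Z_{\geq 1}$ I would take
\[
(k_m, n_m) = (md, \, md^2).
\]
Since $k_m \mid n_m$, we have $\on{GCD}(k_m, n_m) = md$ and thus $n_m/\on{GCD}(k_m, n_m) = d$. Moreover $k_m \equiv 0 \pmod d$, so Condition~\ref{cond:star} is satisfied. By Theorem~\ref{thm:main} there exists a symmetric maximal $(k_m, n_m)$-noncrossing collection $\mathbb I_m$ (produced directly by the $n = dk$ construction of Section~\ref{sec:specialconstruction}, since $n_m = d k_m$). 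Corollary~\ref{cor:algcons1} then furnishes a self-injective Jacobian algebra $\Lambda_m$ whose Nakayama automorphism $\nu_m$ is induced by the shift $I \mapsto I - k_m$.

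Next I would verify that $\nu_m$ has order exactly $d$. A priori its order divides $d$, because the shift by $k_m$ has order $d$ on $\Z/n_m\Z$. To show the order is not a proper divisor, I would exhibit a vertex of the quiver of $\Lambda_m$ whose orbit under the shift has size exactly $d$; i.e.~a non-interval set $I \in \mathbb I_m$ with $|\{I, I\+{n_m}k_m, \ldots, I\+{n_m}(d-1)k_m\}| = d$. For this I would use the collection $B_2 \subseteq \mathbb I_m$ from Section~\ref{sec:specialconstruction}. Because $P_2 = [n_m]\setminus \overline{a_1}$ is a union of $d$ blocks of $k_m-1$ consecutive integers separated by single gaps, and any $k_m$-element subset must span at least two such blocks, every $I \in B_2$ is genuinely a non-interval of $[n_m]$. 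The discussion preceding Proposition~\ref{prop:counting} then gives $|\mathcal{L}_2| = d|B_2|$ whenever $|P_2| > k_m$, equivalently $d(k_m-1) > k_m$, which holds for all $m$ with $m(d-1) > 1$; in particular for $m \geq 2$. Hence all shifts of $I$ are distinct and the orbit has size $d$, so $\nu_m$ has order exactly $d$.

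Finally, I would distinguish the algebras $\Lambda_m$ up to isomorphism by counting simple modules. The number of vertices of the quiver of $\Lambda_m$ equals $|\mathbb I_m|$ (Theorem~\ref{thm:ops}) minus the number $n_m$ of intervals of $[n_m]$; that is,
\[
k_m(n_m - k_m) + 1 - n_m \;=\; m^2 d^2(d-1) + 1 - m d^2,
\]
which is strictly increasing in $m$ for every fixed $d \geq 2$. Thus $\{\Lambda_m\}_{m \geq 2}$ is an infinite family of pairwise non-isomorphic self-injective Jacobian algebras, each with Nakayama automorphism of order $d$, proving the corollary. The main obstacle I anticipate is the second step: nailing down that the order of $\nu_m$ is \emph{exactly} $d$ (and not a proper divisor) requires a concrete orbit-size computation, since Lemma~\ref{lem:divides} alone does not rule out degenerate symmetries when $d \mid k$. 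Producing an explicit non-interval witness in $B_2$ is what circumvents this, but it depends on reading off the combinatorics of the construction in Section~\ref{sec:specialconstruction}.
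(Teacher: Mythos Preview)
Your approach is essentially identical to the paper's: the same family $(k_m,n_m)=(md,md^2)$, the same appeal to Corollary~\ref{cor:algcons1}, and the same vertex-count argument to distinguish the resulting algebras. The only difference is that you supply an explicit witness (a non-interval in $B_2$ with full orbit) to show the Nakayama permutation has order \emph{exactly} $d$ rather than a proper divisor, whereas the paper simply identifies this order with that of $a\mapsto a-k$ on $\Z/n\Z$; your extra care here is correct and arguably fills in a step the paper leaves implicit.
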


\begin{proof}
  Choose $d\in \Z_{>1}$. There are infinitely many choices of $(k,n)$ satisfying Condition~$\ref{cond:star}$ such that $d = n/\on{GCD}(k,n)$. Indeed, take for instance
 $$
 (k,n) \in \left\{ (d,d^2), (2d, 2d^2), (3d, 3d^2), \dots\right\}.
 $$
 By Corollary \ref{cor:algcons1}, for each of these pairs there exists a self-injective Jacobian algebra with Nakayama automorphism of order equal to the order of $a\mapsto a-k$ on $\Z/n\Z$, that is, $d$.
 They are pairwise non-isomorphic, since their quivers have different numbers of vertices.
\end{proof}

\begin{rmk}
  One can also choose the families 
  $$
  (k,n) \in\left\{ (d\pm1,d(d\pm 1)), (2(d\pm 1), 2d(d\pm 1)), (3(d\pm 1), 3d(d\pm 1)), \dots\right\}
$$
in the proof of Corollary \ref{cor:algcons2}, to get other infinite families of self-injective Jacobian algebras with Nakayama automorphism of order $d$. In the latter families, the 
Nakayama permutation acts freely on the vertices of the quiver, while in the family used in the proof there is a fixed vertex.
\end{rmk}

\bibliographystyle{alpha}
\bibliography{Bibliography}

\end{document}